\definecolor{aleacolor}{rgb}{0.16,0.59,0.78}
\renewcommand{\cite}{\citet}
\theoremstyle{plain}
\newtheorem{theorem}{Theorem}[section]                                          
\newtheorem{proposition}[theorem]{Proposition}                          
\newtheorem{lemma}[theorem]{Lemma}
\theoremstyle{definition}
\theoremstyle{remark}
\newtheorem{remark}[theorem]{Remark}
\newcounter{claimcounter}
\newenvironment{claim}{\refstepcounter{claimcounter}{\textbf{Claim \theclaimcounter:}}}{}
\makeatletter \@addtoreset{equation}{section} \makeatother
\newcommand{\N}{\mathbb{N}}
\newcommand{\R}{\mathbb{R}}
\newcommand{\D}{\mathbb{D}}
\newcommand{\Pro}{\mathbb{P}}
\newcommand{\F}{\mathbb{F}} 
\newcommand{\f}{\mathcal{F}} 
\newcommand{\U}{\mathbb{U}} 
\newcommand{\tree}{\mathcal{U}} 
\newcommand{\Ctree} {\mathbb{T}} 
\newcommand{\ctree}{\mathcal{T}} 
\newcommand{\Y}{Y}
\newcommand{\Co}{\mathcal{C}} 
\newcommand{\K}{\mathcal{K}}
\newcommand{\e}{\mathrm{e}}
\newcommand{\der}{\mathrm{d}}
\newcommand{\1}{\mathbbm{1}}
\begin{document}

\title{Time reversal dualities for some random forests}




\author{Miraine Dávila Felipe}
\author{Amaury Lambert}

\address{UMR 7599, Laboratoire de Probabilit\'es et Mod\`eles Al\'eatoires, UPMC and  CNRS,\newline Case courrier 188, 4, Place Jussieu, 75252 PARIS Cedex 05}

\address{UMR 7241, Centre Interdisciplinaire de Recherche en Biologie, Collège de France,\newline 11, place Marcelin Berthelot, 75231 PARIS Cedex 05.}

\email{miraine.davila\_felipe@upmc.fr , amaury.lambert@upmc.fr}
\urladdr{\url{http://www.proba.jussieu.fr/pageperso/amaury/index.htm}}


\subjclass[2000]{60J80;60J85;60G51;92D30} 
\keywords{branching processes; Lévy processes; space-time reversal; duality}

\begin{abstract}
We consider a random forest $\mathcal F^*$, defined as a sequence of i.i.d. birth-death (BD) trees, each started at time 0 from a single ancestor, stopped at the first tree having survived up to a fixed time $T$.  We denote by $\left(\xi^*_t,\ 0\leq t\leq T\right)$ the population size process associated to this forest, and we prove that if the BD trees are supercritical, then the time-reversed process $\left(\xi^*_{T-t},\ 0\leq t\leq T\right)$, has the same distribution as $\left(\widetilde\xi^*_t,\ 0\leq t\leq T\right)$, the corresponding population size process of an equally defined forest $\widetilde{\mathcal F}^*$, but where the underlying BD trees are subcritical, obtained by swapping birth and death rates. 
We generalize this result to splitting trees (i.e. life durations of individuals are not necessarily exponential), provided that the i.i.d. lifetimes of the ancestors have a specific explicit distribution, different from that of their descendants.
The results are based on an identity between the contour of these random forests truncated up to $T$  and the duality property of Lévy processes, which allow to derive other useful properties of the forests with potential applications in epidemiology.
\end{abstract}
\maketitle

\section{Introduction}

We consider a model of branching population in continuous time, where individuals behave independently from one another. They give birth to identically distributed copies of themselves at some positive rate throughout their lives, and have generally distributed lifetime durations. 
A splitting tree \cite{GeKe97,Lam10} describes the genealogical structure under this model and the associated population size process  is a so-called (binary, homogeneous) Crump-Mode-Jagers (CMJ) process.
When the lifetime durations are exponential or infinite (and only in this case) this is a Markov process, more precisely, a linear birth-death (BD) process.

Here, trees are assumed to originate at time 0 from one single ancestor. For a fixed time $T>0$, we define a forest $\f^*$ as a sequence of i.i.d. splitting trees, stopped at the first one having survived up to $T$, and we consider the associated population size or width process $\left(\xi^*_t,\ 0\leq t\leq T\right)$. 
In the case of birth-death processes we have the following identity in distribution .

\begin{theorem}\label{theo:intro}
Let $\f^*$ be a forest as defined previously, of supercritical  birth-death trees with parameters $b>d>0$, 
then the time-reversed process satisfies
\[
\left(\xi^*_{T-t},\ 0\leq t\leq T\right)\,{\buildrel d \over =}\,\left(\widetilde\xi^*_t,\ 0\leq t\leq T\right)
\]
where the right-hand side is the width process of an equally defined forest, but where the underlying trees are subcritical, obtained by swapping birth and death rates (or equivalently, by conditioning on ultimate extinction \citealp{AthNe72}).
\end{theorem}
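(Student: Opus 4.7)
The plan is to encode both forests via their jumping chronological contour processes (JCCP) and to deduce the theorem from a corresponding path-wise identity via the time-reversal duality of spectrally positive Lévy processes. Following \cite{Lam10}, the JCCP of a single birth-death tree with parameters $(b,d)$ started from an ancestor of lifetime $L$ is a spectrally positive Lévy process with drift $-1$ and Lévy measure $b\,d\,e^{-dx}\,\mathrm{d}x$ on $(0,\infty)$, started at $L$ and killed at the first passage to $0$. The JCCP of the forest is the concatenation of i.i.d. such excursions with restart jumps at $0$ of size $\mathrm{Exp}(d)$, stopped at the first excursion to reach level $T$. Since the width $\xi^*_t$ equals the number of downward crossings of the level $t$ by this JCCP (and similarly for $\widetilde\xi^*_t$ after swapping $b$ and $d$), the theorem will follow from a path-wise equality in distribution between the truncated JCCP of $\mathcal{F}^*$, reversed in contour time and reflected about the line $y=T$, and the truncated JCCP of $\widetilde{\mathcal{F}}^*$.

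The central step is this path-wise identity. I would first truncate the JCCP of $\mathcal{F}^*$ at level $T$, since everything above $T$ is irrelevant for the width process on $[0,T]$. Time reversal preserves the negative drift of the Lévy component and turns its positive jumps into negative jumps of the same rate and distribution; the reflection $y\mapsto T-y$ then turns these back into positive jumps. Under this double transformation, the restart jumps of the original forest at level $0$ become upward jumps that arrive at the level $T$ in the reflected-reversed frame (with sizes distributed as the ancestral lifetimes $\mathrm{Exp}(d)$), while the killing of the JCCP of $\mathcal{F}^*$ at the first excursion to reach $T$ corresponds, in the reflected-reversed path, to a forest-type initialization at level $0$. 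A computation on the Poisson random measure of jumps, exploiting the memoryless property of the exponential lifetimes, should then reveal that the joint distribution of the ordinary jumps, the arrive-at-$T$ jumps, and the stopping rule in the reflected-reversed trajectory coincides with that of the truncated JCCP of $\widetilde{\mathcal{F}}^*$, whose Lévy component has drift $-1$, positive jumps of size $\mathrm{Exp}(b)$ at rate $d$, and restart jumps at $0$ of size $\mathrm{Exp}(b)$.

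The main obstacle will be to carry this matching out rigorously, in particular reconciling the original $(b,d)$-Lévy measure with the dual $(d,b)$-Lévy measure and exchanging the roles of the two boundaries $0$ and $T$ under the reflection. In the birth-death case this is made possible by the memoryless property of exponential lifetimes, which explains why no adjustment of the ancestral lifetime distribution is needed here, in contrast to the general splitting-tree setting announced in the abstract. A secondary subtlety is that the surviving excursion of $\mathcal{F}^*$ may have an infinite duration; restricting to the truncation at level $T$ is what allows the argument to avoid this difficulty. Once the path-wise identity in distribution is established, the theorem is immediate: under the reflection, downward crossings at level $T-t$ of the original truncated JCCP are in bijection with downward crossings at level $t$ of the reflected time-reversed contour, and these counts are preserved by contour-time reversal.
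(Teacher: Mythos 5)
Your overall strategy --- encode the forest by its JCCP, truncate at $T$, and transfer the statement to a space-time-reversal identity for the contour via the duality of spectrally positive Lévy processes --- is exactly the route the paper takes (Theorem~\ref{theo:main1} combined with the local-time identity $\Gamma\circ\Co=\Xi$), and your observations that in the birth--death case the dual Lévy measure is $bd\,\e^{-br}\der r$ (birth rate $d$, death rate $b$) and that the memoryless property is what makes the ancestral adjustment unnecessary are both correct. However, the central pathwise identity as you state it is false. Reversing the truncated contour of $\f^*$ in contour time and reflecting about $y=T$ does \emph{not} produce a process distributed as the truncated contour of $\widetilde\f^*$: the original contour starts at the (truncated) lifespan $\ell_1$ of the first ancestor and ends at $0$, so your transformed path starts at $T$ but ends at $T-(\ell_1\wedge T)$, which is strictly positive with positive probability, whereas the contour of any forest must terminate at $0$. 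More structurally, the sub-path of the surviving tree's contour running from its last visit to $T$ down to the final hit of $0$ becomes, after your transformation, the \emph{first} piece of the reversed path (an excursion from $T$ to $0$), while in the contour of $\widetilde\f^*$ the corresponding piece (the last excursion of the reflected surviving tree) must appear \emph{last}; similarly the reversed contours of the extinct trees of $\f^*$ land after, rather than before, the pieces making up the surviving dual tree.

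The paper fixes precisely this with the operator $\chi$: before reversing, one excises the sub-path on $[\overline g_T, g_0)$ (from the last passage at $T$ to the final hit of $0$) and relocates it to the origin, so that the path to be reversed starts at $T$, and its reversal $\K=\rho\circ\chi$ ends at $0$ with its excursions in the correct order; this relocation is also what dictates the special rule in the definition of the dual forest for the edges to the right of the last surviving individual. Your argument is reparable for the width process, because $\chi$ only permutes sub-paths in contour time and therefore leaves the local time $\Gamma_r$ at every level unchanged, so $\Gamma_{T-t}\bigl(\Co^{(T)}(\f^*)\bigr)=\Gamma_t\bigl(\K(\Co^{(T)}(\f^*))\bigr)$ still yields the theorem once the corrected pathwise identity is proved; but as written, your ``computation on the Poisson random measure of jumps'' would fail to match the boundary pieces, and the relocation step is a genuine missing idea you would need to supply.
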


We further generalize this result to splitting trees, provided that the (i.i.d.) lifetimes of the ancestors have a specific distribution, explicitly known and different from that of their descendants. This additional condition comes from the \textit{memory} in the distribution of the lifespans when they are not exponential, that imposes a distinction between ancestors and their descendants as we will see in Section~\ref{sec:results}.
To our knowledge, this is the first time a result is established that reveals this kind of duality in branching processes: provided the initial population is structured as described before, the width process seen backward in time is still the population size process of a similarly defined forest.

Furthermore, these dualities through an identity in distribution are established not only for the population size processes, but for the forests themselves. In other words, we give here the construction of the dual forest $\widetilde\f^*$, from the forest $\f^*$, by setting up different filiations between them, but where the edges of the initial trees remain unchanged. This new genealogy has no interpretation, so far, in terms of the original family, and can be seen as the tool to reveal the intrinsic branching structure of the backward-in-time process. 
\newline

The results are obtained via tree contour techniques and some properties of Lévy processes.
The idea of coding the genealogical structure generated by the branching mechanism through a continuous or jumping stochastic process has been widely exploited with diverse purposes by several authors, see for instance \cite{Po04,GeKe97,GaJa98,DuGa02,Lam10,BaPaBa13}. 

Here we make use of a particular way of exploring a splitting tree, called \textit{jumping chronological contour process} (JCCP).
We know from \cite{Lam10} that this process has the law of a  spectrally positive Lévy process properly reflected and killed. 
The notion of JCCP can be naturally extended to a forest by concatenation. Then our results are proved via a pathwise decomposition of the contour process of a forest and space-time-reversal dualities for Lévy processes \cite{Ber92}. 
We define first some path transformations of the contour of a forest $\f^*$, after which, the reversed process will have the law of the contour of a forest $\widetilde\f^*$. The invariance of the \textit{local time} (defined here as the number of times the process hits a fixed value of its state-space $\R_+$) of the contour after these transformations, allows us to deduce the aforementioned identity in distribution between the population size processes.
\newline

Branching processes are commonly used in biology to represent, for instance, the evolution of individuals with asexual reproduction \cite{Jag91,KiAx02}, or a group of species \cite{NMH94,Sta09}, as well as the spread of an epidemic outbreak in a sufficiently large susceptible population \cite{Bec74,Bec76,Tan06}. We are particularly interested in the last application, which was the primary motivation for this work, and where our duality result has some interesting consequences.

When modeling epidemics, we specify that what was called so far a \textit{birth event} should be thought of as a transmission event of the disease from one (infectious) individual to another (susceptible, assumed to be in excess). In the same way a \textit{death event} will correspond to an infectious individual becoming non-infectious (will no longer transmit the pathogen, e.g. recovery, death, emigration, etc.). Then the branching process describes the dynamics of the size of the infected population, and the splitting tree encodes the history of the epidemic.

In the last few decades, branching processes have found many applications as stochastic individual-based models for the transmission of diseases, especially the Markovian case (notice however, that the assumption of exponentially distributed periods of infectiousness is mainly motivated by mathematical tractability, rather than biological realism).
In recent years, the possibility of sequencing pathogens from patients has been constantly increasing, and with it, the interest in using the phylogenetic trees of pathogen strains to infer the parameters controlling the epidemiological mechanisms, leading to a new approach in the field, the so-called phylodynamic methods \cite{Gren04}. A very short review on the subject is given later in Section~\ref{sec:epi}.

Here we consider the situation where the data consists in incidence time series (number of new cases registered through time) and the \textit{reconstructed transmission tree} (i.e. the information about non sampled hosts is erased from the original process). These trees are considered to be estimated from pathogen sequences from present-time hosts. These observed statistics are assumed to be generated from a unique forward in time process and since no further hypothesis is made, they are not independent in general.
Hence, the computation of the likelihood as their joint distribution quickly becomes a delicate and complex issue, even in the linear birth-death model, since it requires to integrate over all the possible extinct (unobserved) subtrees between 0 and $T$.

Therefore, to solve this problem, we propose a description of the population size process $I\coloneqq (I_t,0\leq t\leq T)$, conditional on the reconstructed genealogy of individuals that survive up until time $T$ (i.e. the reconstructed phylogeny).
This result is a consequence of the aforementioned duality between random forests $\f^*$ and $\widetilde\f^*$.
We state that, under these conditions, the process $I$, backward in time, is the sum of the width processes of independent birth-death trees, each conditioned on its height to be the corresponding time of coalescence, plus an additional tree conditioned on surviving up until time $T$.

The structure in the population, that is the definition of our forests, and the fact that in our model the sampled epidemic comes all from one single ancestor at time 0, can be thought of as a group of strains of a pathogen in their attempts to invade the population, but where only one succeeds (at time $T$). 
However, if various invading strains succeed, analogous results can be deduced by concatenating (summing) an equal number of forests. The general assumption will be then, that for each \textit{successful} strain, there is a geometric (random) number of other strains of the pathogen that become extinct before time $T$. 
The probability of success of these geometric r.v. depends on the recovery and transmission parameters. 
Finally, estimating these parameters from molecular and epidemiological data using this branching processes model, can be addressed through MLE or Bayesian inference. However, these statistical questions are not directly treated here. 
\medskip

For the sake of clarity, and in order to be consistent with most of the literature on branching processes, we will prefer to use the terms of birth and death events throughout the document, except when we present the outcomes of our results in the specific context of epidemiology in Section~\ref{sec:epi}.
The rest of the paper is organized as follows: Section~\ref{sec:preliminaries} is dedicated to some preliminaries on Lévy processes, trees, forests and their respective contours. Finally, in Section~\ref{sec:results}, we state our main results and give most of their proofs, although some of them are left to the Appendix.

\section{Preliminaries}\label{sec:preliminaries}

\subsubsection*{Basic notation}
Let $E=\R\cup\{\partial\}$ where $\partial$ is a topologically isolated point, so-called \textit{cemetery point}. Let $\mathcal{B}(E)$ denote the Borel $\sigma$-field on $E$. 
Consider the space $\D(\R_+,E)$ (or simply $\D$) of càdlàg functions $\omega$ from $\R_+$ into the measurable space $(E,\mathcal{B}(E))$ endowed with Skorokhod topology \cite{JaShi03}, stopped upon hitting $\partial$ and denote the corresponding Borel $\sigma$-field by $\mathcal{B}(\D)$. 
Define the \textit{lifetime} of a path $\omega \in \D$ as $\zeta=\zeta(\omega)$, the unique value in $\R_+\cup\{+\infty\}$ such that $\omega(t)\in \R$ for $0\leq t< \zeta$, and $\omega(t)=\partial$ for every $t\geq\zeta$.
Here $\omega(t-)$ stands for the left limit of $\omega$ at $t\in\R_+$, $\Delta \omega(t)= \omega(t)-\omega(t-)$ for the size of the (possible) jump at $t\neq zeta$ and we make the usual convention $\omega(0-)=\omega(0)$. 

We consider stochastic processes, on the probability space $(\D,\mathcal{B}(\D),\mathbf{P})$, say $X=\left(X_t,t\geq 0\right)$, also called the coordinate process, having $X_t=X_t(\omega)=\omega(t)$. In particular, we consider only processes with no-negative jumps, that is such that $\Delta X_t \in\R_+$ for every $t\geq 0$.
The canonical filtration is denoted by $(\mathscr{F}_t)_{t\geq 0}$.
 
Let $\mathcal{P}(E)$ be the collection of all probability measures on $E$. We use the notation  $\mathbf{P}_x(X\in\cdot ) = \mathbf{P} \left( X\in\cdot \middle\vert X_0 = x\right)$ and for $\mu\in\mathcal{P}(E)$, 
\[
\mathbf{P}_\mu(X\in\cdot )\coloneqq \int\limits_E \mathbf{P}_x(X\in\cdot ) \mu(\der x).
\]
For any measure $\mu$ on $[0,\infty]$, we denote by $\overline\mu$ its tail, that is
\[
\overline{\mu}(x) \coloneqq \mu([x,+\infty])
\]

Define by $\tau_A \coloneqq \inf\{t> 0: X_t\in A\}$, the first hitting time of the set $A\in\mathcal{B}(E)$, with the conventions $\tau_x=\tau_{\{x\}}$, and $\tau_x^-=\tau_{(-\infty,x)}$, $\tau_x^+=\tau_{(x,+\infty)}$ for any $x\in\R$.

\subsubsection*{Some path transformations of càdlàg functions}
In this subsection we will define some deterministic path transformations and functionals of càdlàg stochastic processes.
Define first the following classical families of operators acting on the paths of $X$:
\begin{itemize}
\item the \textit{shift operators}, $\theta_s, s\in \R_+$, defined by
\[
\theta_s(X_t)\coloneqq X_{s+t}, \qquad \forall t\in \R_+
\]
\item the \textit{killing operators}, $k_s$, $s\in\R_+$ , defined by
\[
k_s(X) \coloneqq \left\{
	\begin{array}{ll}
		X_t & \mbox{if } s< t\ \\
		\partial & \mbox{otherwise}
	\end{array}
\right.
\]
the killing operator can be generalized to killing at random times, for instance $X\circ k_{\tau_A}=k_{\tau_A}(X) = k_{\tau_A(X)}(X)$, denotes the process \textit{killed at the first passage into $A$}. It is easy to see that if $X$ is a Markov process, so is $X\circ k_{\tau_A}$.

\item the \textit{space-time-reversal operator} $\rho_s$, $s\in\R_+^*$, as
\[
\rho_s(X)_t \coloneqq X_0 - X_{(s-t)-} \qquad \forall t\in[0,s)
\]
and we denote simply by $\rho$ the space-time-reversal operator at the lifetime  of the process, when $\zeta<+\infty$, that is
\[
\rho(X)_t  \coloneqq X_0 - X_{(\zeta-t)-} \qquad \forall t\in[0,\zeta)
\]
\end{itemize}
The notations $\mathbf{P}\circ \theta_s^{-1}$, $\mathbf{P}\circ k_s^{-1}$ and $\mathbf{P}\circ \rho^{-1}$ stand for the law of the shifted, killed and space-time-reversed processes when $\mathbf{P}$ is the law of $X$.

When $X$ has finite variation we can define its local time process, taking values in $\N\cup\{+\infty\}$ as follows,
\[
\Gamma_r(X) = \textrm{Card}\{t\geq 0: X_t=r\}, \quad r\in \R,
\]
that is the number of times the process hits $r$ (before being sent to $\partial$).

For a sequence of processes in the same state space, say $(X_i)_{i\geq 1}$ with lifetimes $(\zeta_i, i\geq 1)$, we define a new process by the concatenation of the terms of the sequence, denoted by
\[
 [X_1,X_2,\dots]
\]
where the juxtaposition of terms is considered to stop at the first element with infinite lifetime. For instance, if $\zeta_1<+\infty$ and $\zeta_2=+\infty$, then for every $n\geq 2$
\[
 [X_1,X_2, \ldots, X_n]_t = 
\left\{
	\begin{array}{ll}
		X_{1,t}& \mbox{if } 0\leq t<\zeta_1 \\
		X_{2,t-\zeta_1} & t \geq \zeta_1
	\end{array}
\right.
\]
\medskip

We consider now a \textit{clock} or time change that was introduced in \cite{Ber96,Don07} in order to construct the probability measure of a Lévy process conditioned to stay positive, that will have the effect of erasing all the subpaths of $X$ taking non-positive values and closing up the gaps. 
We define it here for a càdlàg function $X$, for which we introduce the time it spends in $(0,+\infty)$, during a fixed interval $[0,t]$, for every $t\leq \zeta(X)$,
\[A_t\coloneqq\int\limits_0^t \1_{\{X_u> 0\}} \der u\]
and  its right-continuous inverse, $\alpha (t)\coloneqq \inf\left\{u\geq 0: A_u>t\right\}$, such that a time substitution by $\alpha$, gives a function with values in $[0,+\infty)\cup\{\partial\}$, in the following sense,
\[
(X\circ \alpha)_t = 
\left\{
	\begin{array}{ll}
		X_{\alpha(t)} & \mbox{if } \alpha(t)<+\infty \\
		\partial & \mbox{otherwise}
	\end{array}
\right.
\]

\begin{remark}\label{rem:reflect} 
Analogous time changes $\alpha^s$ (or $\alpha_s$) can be defined for any $s\in\R$, removing the excursions of the function above (or below) the level $s$, that is by time-changing $X$ via the right-continuous inverse of $t\longmapsto\int_0^t \1_{\{X_u < s \}} \der u$ (or $t\longmapsto\int_0^t \1_{\{X_u > s \}} \der u$).
\end{remark}

\medskip

\paragraph{Last passage from $T$ to 0:}
Fix $T>0$. 
Define the following special points for $X$
\begin{eqnarray}
g_T &\coloneqq& \inf\{t>0:X_t=T \}\nonumber \\
g_0 &\coloneqq& \inf\{t>g_T: X_{t-}=0\} \nonumber \\ 
\overline{g}_T &\coloneqq& \sup\{t<g_0: X_t=T\} \nonumber
\end{eqnarray}
and suppose $g_0<+\infty$.
We want to define a transformation of $X$ so that the subpath in the interval $[\overline{g}_T,g_0)$ will be placed at the beginning, shifting to the right the rest of the path.
Therefore, we define the function $\vartheta(X)=\vartheta:[0,g_0]\rightarrow[0,g_0]$ as,
\begin{eqnarray}
\vartheta(t)\coloneqq \left\{
	\begin{array}{ll}
		\overline{g}_T + t  & \mbox{if } \ 0 \leq t < g_0 - \overline{g}_T\\
		t - (g_0 - \overline{g}_T) & \mbox{if } \ g_0- \overline{g}_T \leq t \leq g_0
	\end{array}
\right. \nonumber
\end{eqnarray}
and then we consider the transformed path $\chi (X)$, defined for each $s\geq 0$ as,
\begin{eqnarray}
\chi(X)_s \coloneqq \left\{
	\begin{array}{ll}
		X_{\vartheta(s)}  & \textrm{if } s\in [0,g_0) \\
		\partial & \textrm{if } s\geq g_0
	\end{array}
\right. \nonumber
\end{eqnarray}

\subsection{Spectrally positive Lévy processes}\label{subsec:spect-pos-Levy}

\textit{Lévy processes} are those stochastic processes with stationary and independent increments, and almost sure right continuous with left limits paths.
During this subsection we recall some classic results from this theory and establish some others that will be used later. We refer to \cite{Ber96} and \cite{Ky06} for a detailed review on the subject.
\newline

We consider a real-valued Lévy process $\Y=\left(\Y_t,t\geq 0\right)$
and we denote by $P_x$ its law conditional on $\Y_0=x$. We assume $\Y$ is \textit{spectrally positive}, meaning it has no negative jumps. This process is characterized by its Laplace exponent $\psi$, defined for any $\lambda\geq 0$ by
\[
E_0\left[\e^{-\lambda \Y_t}\right] = \e^{t\psi(\lambda)}.
\]
We assume, furthermore, that $\Y$ has finite variation. Then $\psi$ can be expressed, thanks to the Lévy-Khintchine formula as
\begin{eqnarray}
\psi(\lambda) = -d\lambda - \int\limits_0^\infty \left(1-\e^{-\lambda r}\right)\Pi(\der r), 
\end{eqnarray}
where $d\in\R$ is called \textit{drift coefficient} and $\Pi$ is a $\sigma$- finite measure on $(0,\infty]$, called the \textit{Lévy measure}, satisfying $\int_0^{\infty}(r\wedge 1)\Pi(\der r)<\infty$. Notice we allow $\Pi$ to charge $+\infty$, which amounts to killing the process at rate $ \Pi(\{+\infty\})$.
Some might prefer to say, in other words, that $\Y$ is a subordinator (increasing paths) with possibly negative drift and possibly killed at a constant rate.

The Laplace exponent is infinitely differentiable, strictly convex (when $\Pi \not \equiv 0$), $\psi(0)=0$ (except when $\Pi$ charges $+\infty$, in which case $\psi(0)=\psi(0+)=-\Pi(\{+\infty\})$) and $\psi(+\infty)=+\infty$.
Let $\eta\coloneqq \sup\{\lambda\geq 0: \psi(\lambda)=0\}$. Then we have that $\eta=0$ is the unique root of $\psi$, when  $\psi^\prime(0+)=1-m\geq 0$. Otherwise the Laplace exponent has two roots, 0 and $\eta>0$. 
It is known that for any $x>0$,
\[P_x\left(\tau_0<+\infty\right)=\e^{-\eta x}.
\]
More generally, there exists a unique continuous increasing function $W:[0,+\infty)\rightarrow[0,+\infty)$, called the \textit{scale function}, characterized by its Laplace transform,
\[
\int\limits_0^{+\infty} \e^{-\lambda x}W(x) \der x = \dfrac{1}{\psi(\lambda)}, \qquad \lambda>\eta,
\]
such that for any $0<x<a$,
\begin{equation} \label{eq:exit-two-side}
P_x\left(\tau_0<\tau_a^+\right) = \dfrac{W(a-x)}{W(x)} 
\end{equation}

\subsubsection*{Pathwise decomposition}

For a Markov process, a point $x$ of its state space is said to be \textit{regular} or \textit{irregular} for itself, if $\mathbf{P}_x(\tau_x=0)$ is 1 or 0.
In a similar way, when the process is real-valued, we can say it is \textit{regular downwards} or \textit{upwards} if we replace $\tau_x$ by $\tau_x^-$ or $\tau_x^+$ respectively.
For a spectrally positive Lévy processes we know from \cite{Ber96} that $\Y$ has bounded variation if and only if 0 is irregular (and irregular upwards).
In this case there is a natural way of decomposing the process into excursions from any point $x$ on its state space.
For simplicity, here we depict the situation for $x=0$, the generalization being straightforward from the Markov property.

The process $\Y$ under $P_0$, can be described as a sequence of independent and identically distributed excursions from $0$, stopped at the first one with infinite lifetime.
Define the sequence of the successive hitting times of $0$, say $(\tau_0^{(i)}, i\geq 0)$ with $\tau_0^{(0)}=0$. For $i\geq 0$, on $\{\tau_0^{(i)}<+\infty\}$, define the shifted process,
\[
\epsilon_i \coloneqq 	\left( \Y_{\tau_0^{(i)}+t}, 0\leq t< \tau_0^{(i+1)} - \tau_0^{(i)} \right).
\]
The strong Markov property and the stationarity of the increments imply that $(\epsilon_i,i\geq 0)$ is a sequence of i.i.d. excursions, all distributed as $(\Y_t, 0\leq t\leq \tau_0)$ under $P_0$, with a possibly finite number of elements, say $N+1$, which is geometric with parameter $P_0(\tau_0=+\infty)$, corresponding to the time until the occurrence of an infinite excursion.
Hence, the paths of $\Y$ are structured as the juxtaposition of these i.i.d. excursions, $N$ with finite lifetime, followed by a final infinite excursion.

We now introduce, for any $a>0$, the process $\Y$ reflected below $a$, that is the process being immediately restarted at $a$ when it enters $(a,+\infty)$. This process is also naturally decomposed in its excursions below $a$, in the same way described before.
More precisely, let $(\epsilon_i,i\geq 1)$ be a sequence of i.i.d. excursions distributed as $\Y$ under $P_a$, but killed when they hit $(a,+\infty)$, that is, with common law $P_a\circ k^{-1}_{\tau_a^+}$. Notice that since the process is irregular upwards, then necessarily these excursions have a strictly positive lifetime $P_a$-a.s..
Define the reflected process $Y^{(a)}$ as their concatenation, that is
\begin{eqnarray}\label{eq:reflect}
Y^{(a)} \coloneqq [\epsilon_1,\epsilon_2,\ldots]
\end{eqnarray}

\subsubsection*{Overshoot and undershoot}\label{subsec:over-undershoot}
Formula (8.29) from \cite{Ky06} adapted to spectrally positive Lévy processes gives the joint distribution of the overshoot and undershoot of $\Y$ when it first enters the interval $[a,+\infty]$, without hitting 0. Let $x\in(0,a)$,  then for $u\in(0,a]$ and $v\in(0,+\infty)$, 
\begin{eqnarray}
&&P_x\left((a-\Y_{\tau_a^+-})\in \der u,(\Y_{\tau_a^+}-a)\in \der v, \tau_a^+<\tau_0\right)  \nonumber \\
&&\qquad \qquad = \left(\dfrac{W(a-x)W(a-u)}{W(a)}-W(a-x-u)\right)\der u\Pi(\der v+u) \label{eq:under-overshoot-0}
\end{eqnarray}

In the same way, if the restriction on the minimum of the process before hitting $[a,+\infty)$ is removed, choosing $x=a=0$ for simplicity, we have for $u,v\in(0,+\infty)$
\begin{eqnarray}\label{eq:under-overshoot}
P_0\left((\Y_{\tau_0^+-})\in \der u,\Y_{\tau_0^+}\in \der v,\tau_0^+<+\infty\right)  = \e^{-\eta u}\der u\Pi(\der v+u)
\end{eqnarray}

Then we can compute the distribution of the undershoot and overshoot  of an excursion away from 0, of the process $Y$ starting at $0$ on the event $\tau_0^+<+\infty$. By integrating (\ref{eq:under-overshoot}) we get,
\begin{eqnarray}
P_0\left(-\Y_{\tau_0^+-}\in \der u,\tau_0^+<+\infty\right) &=& \der u \int\limits_{[0,+\infty]} \e^{-\eta u}\Pi(\der v+u)  = \e^{-\eta u} \overline{\Pi}(u) \der u \nonumber \\
P_0\left(\Y_{\tau_0^+}\in \der v,\tau_0^+<+\infty\right) &=& \e^{\eta v} \der v \int\limits_{[v,+\infty]}  \e^{-\eta y} \Pi(\der y)  = \e^{\eta v} \overline{\widetilde\Pi}(v) \der v \nonumber 
\end{eqnarray}
where we define the measure 
\[
\widetilde\Pi(\der y)\coloneqq \e^{-\eta y} \Pi(\der y)
\]
on $(0,+\infty)$ of mass $\widetilde{b}:=b-\eta$.

Further, we can deduce from Theorem VII.8 in \cite{Ber96} that
\begin{eqnarray}
P_0\left(\tau_0^+<+\infty \right) = 1-\dfrac{W(0)}{W(\infty)} = 1\wedge m \label{eq:return-to-zero-finite}
\end{eqnarray}
as a consequence of (\ref{eq:exit-two-side}) and
\begin{eqnarray}
\lim_{t\rightarrow+\infty}W(t)
= \left\lbrace \begin{array}{cc}
+\infty & \textrm{if} \ \  m \geq 1 \\
\frac{1}{1-m} &  \textrm{if} \ \ m < 1
\end{array} \right., \qquad W(0)= 1. \nonumber
\end{eqnarray}
These two formulas come from the analysis of the behavior at $0$ and $+\infty$ of $W$'s Laplace transform, $1/\psi$, followed by the application of a Tauberian theorem. We refer to Propositions 5.4 and 5.8 from \cite{Lam10} for the details. 
\newline

We denote by $\mu_\top$ the probability measure on $[0,\infty)$ of the undershoot away from 0 under $P_0\left(\cdot\middle\vert \tau_0^+<+\infty\right)$, defined as follows,
\begin{eqnarray}
\mu_\top(\der u)\coloneqq  P_0\left(-\Y_{\tau_0^+-}\in \der u\middle\vert \tau_0^+<+\infty\right) = \dfrac{\e^{-\eta u} \overline{\Pi}(u)}{m\wedge 1} \der u\label{eq:undershoot}
\end{eqnarray}

Analogously the probability distribution of the corresponding overshoot is,
\begin{eqnarray}
\mu_\bot(\der v) \coloneqq P_0\left(\Y_{\tau_0^+}\in \der v\middle\vert \tau_0^+<+\infty\right) = \dfrac{\e^{\eta v} \overline{\widetilde\Pi}(v)}{m\wedge 1} \der v  \label{eq:overshoot}
\end{eqnarray}

\begin{remark}
In the exponential case with rates $b$ and $d$ it is not hard to see that the overshoot is exponentially distributed with parameter $d$, and the undershoot with parameter $b\vee d$.
\end{remark}

\subsubsection*{Process conditioned on not drifting to $+\infty$}

The following statements are direct consequences of Corollary VII.2  and Lemma VII.7 from \cite{Ber96}. 
The process $\Y$ drifts to $-\infty$, oscillates or drifts to $+\infty$, if $\psi^\prime(0+)$ is respectively positive, zero, or negative. 
As we mentioned before, only in the first case the Laplace exponent $\psi$ has a strictly positive root $\eta$, which leads to considering a new family of probability measure $\widetilde P_x$ via the exponential martingale $(\e^{-\eta (\Y_t-x)},t\geq 0)$, that is with Radon-Nikodym density
\[
\left.\frac{\der \widetilde P_x}{\der P_x} \right\vert_{\mathscr{F}_t} =\e^{-\eta (\Y_t-x)}.
\]
As we will show later, this can be thought of as the law of the initial Lévy process conditioned to not drift to $+\infty$, and in fact, $\Y$ under $\widetilde P$ is still a spectrally positive Lévy process with Laplace exponent 
\[
\widetilde\psi(\lambda) = \psi(\eta + \lambda) = -d\lambda - \int\limits_0^\infty \left(1-\e^{-\lambda r}\right)\widetilde\Pi(\der r),
\]
which is the Laplace exponent of a finite variation, spectrally positive Lévy process with drift $d$ and Lévy measure $\widetilde\Pi$. 
Furthermore, it is established that for every $x>0$, the law of the process until the first hitting time of $0$, that is $\left(\Y_t, 0\leq t< \tau_0\right)$, is the same under $P_x(\cdot\vert\tau_0<+\infty)$ as under $\widetilde P_x(\cdot)$, that is
\begin{eqnarray}\label{eq:prob-cond-hit-0}
P_x(\cdot\vert\tau_0<+\infty)\circ k^{-1}_{\tau_0} = \widetilde P_x\circ k^{-1}_{\tau_0} 
\end{eqnarray}

\medskip

Finally, we compute the following convolution products, which will be used hereafter, obtained from a direct inversion of the Laplace transform for $W$ and $\widetilde W$ (see p. 204-205 in \cite{Ber96}), where $\widetilde{W}$ is the scale function defined with respect to the Laplace exponent $\widetilde\psi$,
\begin{eqnarray}
\int\limits_0^T  W(T-v) \overline{\Pi}(v) \der v  = W(T) - 1 \label{eq:convol-W} \\
\int\limits_0^T  \widetilde{W}(T-v) \overline{\widetilde \Pi}(v) \der v  = \widetilde W(T) -1 \label{eq:convol-W-tilde}
\end{eqnarray}

\subsubsection*{Time-reversal for Lévy processes}

Another classical property of Lévy processes is their duality under time-reversal in the following sense: if a path is space-time-reversed at a finite time horizon, the new path has the same distribution as the original process.
More precisely, in the case of spectrally positive Lévy processes we have the following results from \cite{Ber92}:

\begin{proposition}[Duality]\label{prop:duality}
The process $\Y$ has the following properties: 
\begin{enumerate}[label={\upshape(\roman*)}]
\item 
under $P_0\left(\cdot\middle\vert \tau_0<+\infty\right)$, $\left(\Y_{t}, 0\leq t< \tau_0\right)$  and $\left(-\Y_{(\tau_0-t)-}, 0\leq t< \tau_0\right)$ have the same law
\item 
under $P_0\left(\cdot\middle\vert -\Y_{\tau_0^+-}= u\right)$ the reversed excursion,
$\left(-\Y_{(\tau_0^+-t)-}, 0\leq t< \tau_0^+\right)$ has the same distribution as $\left(\Y_{t}, 0\leq t< \tau_0\right)$ under $P_u\left(\cdot\middle\vert\tau_0<+\infty\right)$
\item 
under $P_0\left(\cdot\middle\vert -\Y_{\tau_0^+-}= y, \Delta Y (\tau_0^+) = z\right)$, the processes $\left(-\Y_{(\tau_0^+-t)-}, 0\leq t< \tau_0^+\right)$ and $\left(\Y_{\tau_0^++t}, 0\leq t< \tau_0-\tau_0^+\right)$ are independent. The first one has law $P_y (\cdot\vert \tau_0<+\infty) \circ k^{-1}_{\tau_0}$ and the second one has law $P_{z-y}\circ k^{-1}_{\tau_0}$
\end{enumerate}
\end{proposition}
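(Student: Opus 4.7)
All three assertions are facets of time-reversal duality for spectrally positive Lévy processes. The core ingredient is the classical deterministic-time reversal identity: under $P_x$, for any fixed $T>0$,
\begin{equation*}
(\Y_T - \Y_{(T-s)-},\ 0\leq s\leq T)\ \stackrel{d}{=}\ (\Y_s - x,\ 0\leq s\leq T),
\end{equation*}
which follows at once from the stationarity and independence of the increments being preserved under reversal (the extra left-continuity at the reversed endpoints only matters on a countable set of times). My plan is to transfer this identity from a deterministic horizon $T$ to the random times $\tau_0$ and $\tau_0^+$, then combine it with the strong Markov property and with the overshoot/undershoot formulas derived in Section~\ref{subsec:spect-pos-Levy}. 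Spectral positivity is used crucially throughout: since $\Y$ has no negative jumps, it arrives continuously at $0$ whenever $\tau_0<+\infty$, so $\Y_{\tau_0-}=\Y_{\tau_0}=0$ almost surely on this event.

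\textbf{Part (i).} I would apply the deterministic-time reversal at the random time $\tau_0$ under $P_0(\cdot\mid\tau_0<+\infty)$. The step requiring care is replacing the deterministic $T$ by $\tau_0$; this I would handle by standard dyadic approximation, setting $\tau_0^{(n)}:=2^{-n}\lceil 2^n\tau_0\rceil$, applying the identity on $\{\tau_0\leq N\}$ at each level $kN/2^n$, and passing to the limit using right-continuity of the paths and dominated convergence. Since $\Y_{\tau_0}=0$ almost surely on this event, the reversed trajectory reduces to $(-\Y_{(\tau_0-s)-})$, which therefore has the same distribution as $(\Y_s)$ under $P_0(\cdot\mid\tau_0<+\infty)$.

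\textbf{Part (ii).} Under $P_0$ on $\{\tau_0^+<+\infty\}$, the pre-$\tau_0^+$ trajectory lies in $(-\infty,0]$ and terminates at the undershoot value $-u$; its reversal $(-\Y_{(\tau_0^+-s)-})$ thus starts at $u$, lies in $[0,+\infty)$, and reaches $0$ continuously at time $\tau_0^+$. To identify its law with that of $\Y$ under $P_u(\cdot\mid\tau_0<+\infty)$ killed at $\tau_0$, I would work conditionally on $\{-\Y_{\tau_0^+-}=u\}$: the conditional distribution of the pre-$\tau_0^+$ trajectory can be disintegrated using the joint law of the undershoot and the path via \eqref{eq:under-overshoot-0}, and the resulting description coincides with the image under reversal of $\Y$ under $P_u(\cdot\mid\tau_0<+\infty)$, which is obtained by the shifted version of part (i). A cleaner route is to invoke \eqref{eq:prob-cond-hit-0}: under $\widetilde P_u$ the process is a spectrally positive Lévy process with Laplace exponent $\widetilde\psi$, to which part (i) applies directly, yielding the claim after identification of the conditioned and tilted laws killed at $\tau_0$.

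\textbf{Part (iii).} Independence of the reversed pre-$\tau_0^+$ segment and the forward post-$\tau_0^+$ segment, conditionally on $(-\Y_{\tau_0^+-},\Delta\Y(\tau_0^+))=(y,z)$, is a direct consequence of the strong Markov property of $\Y$ at the stopping time $\tau_0^+$. The post-$\tau_0^+$ piece restarts afresh from $\Y_{\tau_0^+}=z-y$ and is killed upon hitting $0$, hence has law $P_{z-y}\circ k^{-1}_{\tau_0}$; the reversed pre-$\tau_0^+$ piece has law $P_y(\cdot\mid\tau_0<+\infty)\circ k^{-1}_{\tau_0}$ by part (ii). The main obstacle in the whole argument is the rigorous passage from deterministic- to random-time reversal in parts (i) and (ii), together with the careful bookkeeping of the conditioning on the undershoot; once these are settled, (iii) is immediate from the strong Markov property.
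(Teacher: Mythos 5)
The paper does not prove this proposition: it is quoted verbatim from \cite{Ber92} as a known property of spectrally positive Lévy processes, so there is no in-paper argument to compare yours against. Judged on its own merits, your sketch assembles the right ingredients (fixed-horizon space-time reversal, the strong Markov property at $\tau_0^+$, the factorization of the joint undershoot/overshoot law, and the $h$-transform identity \eqref{eq:prob-cond-hit-0}), and your treatment of (iii) is essentially sound: conditioning on $(-\Y_{\tau_0^+-},\Delta\Y(\tau_0^+))=(y,z)$ fixes $\Y_{\tau_0^+}=z-y$, so the post-$\tau_0^+$ law $P_{z-y}\circ k_{\tau_0}^{-1}$ and the conditional independence do follow from the strong Markov property, granted (ii).

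The genuine gap is in the step you yourself identify as the main obstacle, and the mechanism you propose for it does not work as described. The fixed-horizon identity reverses the path about the terminal value $\Y_T$: the reversed path is $s\mapsto \Y_T-\Y_{(T-s)-}$, anchored at $\Y_T$, and its first return to $0$ corresponds to the \emph{last} time before $T$ at which the original path takes the value $\Y_T$ --- not to $\tau_0$. Applying this at the dyadic time $T=k2^{-n}$ on the event $\{(k-1)2^{-n}<\tau_0\le k2^{-n}\}$ therefore anchors the reversal at $\Y_{k2^{-n}}\neq 0$, and neither that event nor the functional $F((\Y_s)_{s<\tau_0})$ is carried by the reversal into its intended counterpart; there is no reason the errors vanish as $n\to\infty$, so the approximation does not close. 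The honest routes are different in kind: either a last-exit (co-optional time) reversal theorem in the spirit of Nagasawa, or Itô excursion theory relating the excursion measure to that of the dual process, or --- most elementary in the present finite-variation setting where $0$ is irregular --- a direct computation on the drift-plus-compound-Poisson jump configuration of the path on $[0,\tau_0]$, checking that the law of the finite set of jump times and sizes is invariant under $(t,j)\mapsto(\tau_0-t,j)$ together with negation. A similar caveat applies to your ``cleaner route'' for (ii): part (i) concerns the excursion from $0$ (which visits $(-\infty,0)$), whereas the pre-$\tau_0^+$ piece reversed is a first-passage path from $u>0$ down to $0$, so one still needs a reversal statement for first-passage paths, not merely a change of measure. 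Until the random-time reversal is actually established, parts (i) and (ii) --- and hence the first marginal in (iii) --- remain unproved.
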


\subsection{Trees and forests}

We refer to \cite{Lam10} for the rigorous definition and properties of discrete and chronological trees. The notation here may differ from that used by this author, so it will be specified in the following, as well as the main features that will be subsequently required.

A \textit{discrete tree}, denoted by $\tree$, is a subset of $\U\coloneqq\bigcup_{n\geq 0}\N^n$, satisfying some specific well known properties.
From a discrete tree $\tree$ we can obtain an $\R$-tree by adding birth levels to the vertices and lengths (\textit{lifespans}) to the edges, getting what is called a \textit{chronological tree} $\ctree$. 
For each individual $u$ of a discrete tree $\tree$, the associated \textit{birth level} is denoted by $\alpha(u)$ and the \textit{death level} by $\omega(u)$ ($\alpha(u)\in\R_+$, $\omega(u)\in\R_+\cup\{+\infty\}$ and such that $\alpha(u)<\omega(u)$).

Then $\ctree$ can be seen as the subset of $\U\times [0,+\infty)$ containing all the \textit{existence points} of individuals (vertices of the discrete tree): for every $u\in\tree$, $s\in [0,+\infty)$, then $(u,s)\in\ctree$ if and only if $\alpha(u)< s \leq \omega(u)$. The root will be denoted by $\rho\coloneqq(\varnothing,0)$ and $\pi_1$ and $\pi_2$ stand respectively for the canonical projections on $\U$ and $ [0,+\infty)$.
$\Ctree$ denotes the set of all chronological trees.

For any individual $u$ in the discrete tree $\tree=\pi_1(\ctree)$, we denote by $\zeta(u)$ its lifespan, i.e. $\zeta(u) \coloneqq \omega(u)-\alpha(u)$. 
Then the \textit{total length} of the chronological tree is the sum of the lifespans of all the individuals, that is,  \[\ell(\ctree)\coloneqq\sum_{v\in\pi_1(\ctree)}\zeta(v)\leq +\infty .\]

We will also refer to the \textit{truncated tree} up to level $s$, denoted by $\ctree^{(s)}$ for the chronological tree formed by the existence points $(u,t)$ such that $t\leq s$.
A chronological tree is said to be \textit{locally finite} if for every level $s\in[0,+\infty)$ the total length of the truncated tree is finite, $\ell(\ctree^{(s)})<+\infty$.

We can define the \textit{width} or \textit{population size process} of locally finite chronological trees as a mapping $\Xi$ that maps a chronological tree $\ctree$ to the function $\xi:\R_+\rightarrow \N$ counting the number of extant individuals at time $t\geq 0$
\begin{eqnarray}
\Xi(\ctree) \coloneqq  \left(\xi_t(\ctree), t \geq 0\right) 	
\end{eqnarray}
where for every $t\geq 0$,
\begin{eqnarray}
\xi_t(\ctree) = \textrm{Card}\{v\in\pi_1(\ctree):\alpha(v)<t\leq \omega(v)\} \nonumber
\end{eqnarray}
These functions are càdlàg, piecewise constant, from $\R_+$ into $\N$, and are absorbed at 0. 
Then we can define the time of extinction of the population in a tree as $T_{\textrm{Ext}}\coloneqq \inf\{t\geq 0: \xi_t(\ctree)=0\} $. 
Notice there might be an infinite number of jumps in $\R_+$, but only a finite number in every compact subset in the case of locally finite trees.

Chronological trees are assumed to be embedded in the plane, as on Fig.~\ref{fig:contour} (right), with time running from bottom to top, dotted lines representing filiations between individuals: the one on the left is the parent, and that on the right its descendant.

We will call a \textit{forest} every finite sequence of chronological trees, and we will denote the set of all forests by $\F$. More specifically for every positive integer $m$, let us define the set of \textit{$m$-forests} as follows,
\[\F_m\coloneqq \{(\ctree_1,\ctree_2,\ldots,\ctree_m): \ctree_1,\ctree_2,\ldots,\ctree_m\in\Ctree\}\]
then,
\[\F = \bigcup_{m\in\N} \F_m\]
It is straightforward to extend the notion of width process to a forest, say $\f=(\ctree_1,\ctree_2,\ldots,\ctree_m)\in\F$, as the sum of the widths of every tree of the sequence, i.e.,
\[\Xi(\f) : = \sum\limits_{i=1}^m \Xi(\ctree_i)\]

\subsection{The contour process}\label{subsec:contour}

As mentioned before, the genealogical structure of a chronological tree can be coded via continuous or càdlàg functions. They are usually called contour or exploration processes, since they refer mostly to deterministic functions of a (randomly generated) tree. In this case, the contour is a $\R$-valued stochastic process containing all the information about the tree, so that the latter can always be recovered from its contour. Among the different ways of exploring a tree we will exploit the jumping chronological contour process (JCCP) from \cite{Lam10}.

The JCCP of a chronological tree $\ctree$ with finite length $\ell=\ell(\ctree)$, denoted by $\Co(\ctree)$, is a function from $[0,\ell]$ into $\R_+$, that starts at the lifespan of the ancestor  and then walks backward along the right-hand side of this first branch at speed $-1$ until it encounters a birth event, when it jumps up of a height of the lifespan of this new individual, getting to the next tip, and then repeating this procedure until it eventually hits $0$, as we can see in Fig.~\hyperref[contour]{\ref*{fig:contour}} (see \cite{Lam10} for a formal definition).

Then it visits all the existence times of each individual exactly once and satisfies that the number of times it hits a time level, say $s\geq 0$, is equal to the number of individuals in the population at time $s$. 
More precisely, for any finite tree $\ctree$
\[
\Gamma\circ \Co(\ctree) = \Xi(\ctree),
\]
and more generally, if $\ctree$ is locally finite, this is also satisfied for the truncated tree at any level $s>0$, that is
\[
\Gamma\circ \Co(\ctree^{(s)}) = \Xi(\ctree^{(s)}).
\]
We can extend the notion of contour process to a forest $\f=(\ctree_1,\ctree_2,\ldots,\ctree_m)$ of finite total length $\overline{\ell} \coloneqq\sum_{i=1}^{m}\ell(\ctree_i)$, similarly to the way it is done in \cite{DuGa02}, by concatenating the contour functions,
\[
\left[\Co_t(\ctree_1), t\in[0,\ell(\ctree_1))\right), \ \left(\Co_t(\ctree_2), t\in[0,\ell(\ctree_2))\right),\ldots, \left(\Co_t(\ctree_m), t\in[0,\ell(\ctree_m))\right].
\] 
It will be denoted as well by $\Co(\f)$ or simply $\Co$ when there is no risk of confusion. We notice that the function thus obtained determines a unique sequence of chronological trees since they all start with one single ancestor.

\begin{center}
\begin{figure}
\includegraphics[scale=.7]{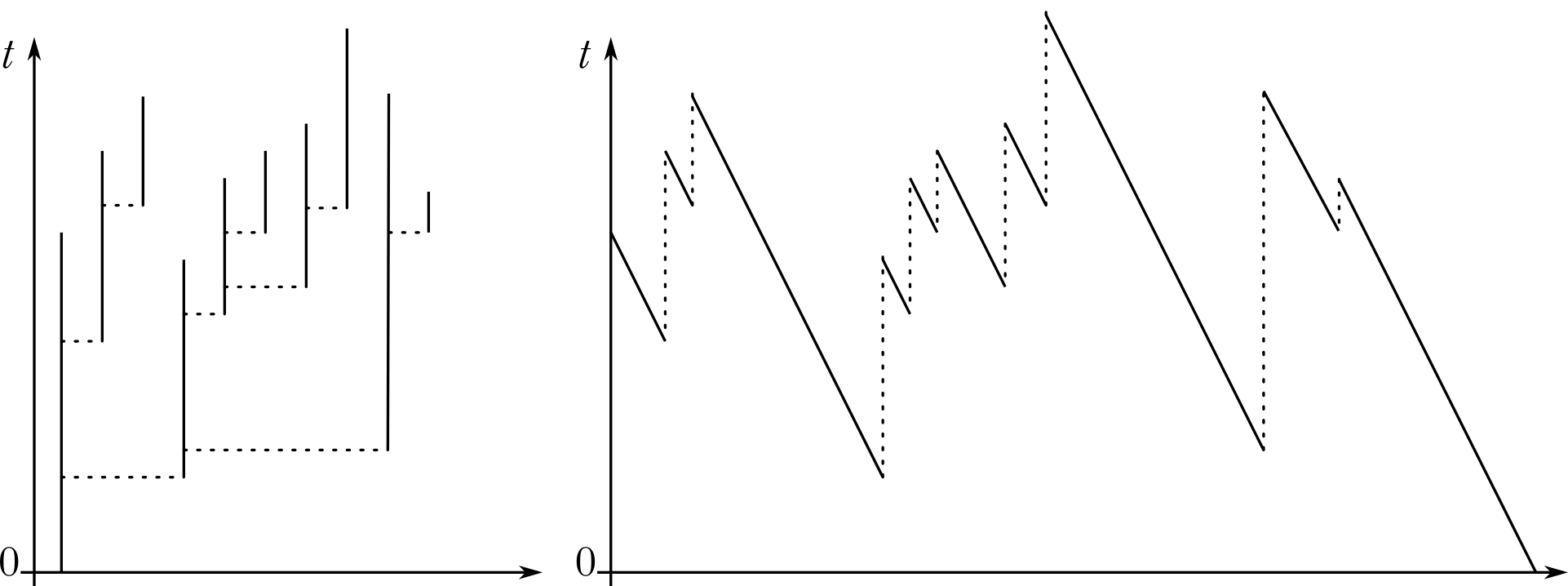}
\caption{An example of chronological tree with finite length (left) and its contour process (right).}
\label{fig:contour}
\end{figure}
\end{center}

\subsection{Stochastic model}
We consider a population (or particle system) that originates at time $0$ with one single progenitor. Then individuals (particles) evolve independently of each other, giving birth to i.i.d. copies of themselves at constant rate, while alive, and having a life duration with general distribution. 
The family tree under this stochastic model will be represented by a \textit{splitting tree}, that can be formally defined as an element $\ctree$ randomly chosen from the set of chronological trees, characterized by a $\sigma$-finite measure $\Pi$ on $(0,\infty]$ called the \textit{lifespan measure}, satisfying $\int_{(0,\infty]}\left(r\wedge 1\right)\Pi(\der r)<\infty$.

In the general definition individuals may have infinitely many offspring. However, for simplicity we will assume that $\Pi$ has mass $b$, corresponding to a population where individuals have i.i.d. lifetimes distributed as $\Pi(\cdot)\slash b$ and give birth to single descendants throughout their lives at constant rate $b$, all having the same independent behavior. In most of the following results this hypothesis is not necessary, and they remain valid if $\Pi$ is infinite.

Under this model the width process $\Xi(\ctree) =\left(\xi_t(\ctree),t\geq 0\right)$ counting the population size through time is a \textit{binary homogeneous Crump-Mode-Jagers process} (CMJ). This process is not Markovian, unless $\Pi$ is exponential (\textit{birth-death process}) or a Dirac mass at $\{+\infty\}$ (\textit{Yule process}). 

A tree, or its width process $\Xi$, is said to be subcritical, critical or supercritical if
\[m\coloneqq\int\limits_0^{+\infty}r\Pi(\der r)\]
is respectively less than, equal to or greater than 1, and we define the extinction event $\textrm{Ext} \coloneqq \{\lim_{t\rightarrow\infty} \xi_t\left(\ctree\right)=0\}$.

For a splitting tree we can define, as well as for its deterministic analogue, the JCCP.
Actually, the starting point of the present work, is one of the key results in \cite{Lam10}, where the law of the JCCP of a splitting tree truncated up to $T$ or conditional on having finite length is characterized by a Lévy process.

\begin{theorem}[\cite{Lam10}]\label{theo:lambert-2010}
If $X^{(T)}$ is the JCCP of a splitting tree with lifespan measure $\Pi$ truncated up to $T\in (0,+\infty)$ and $\Y$ is a spectrally positive Lévy process with finite variation and Laplace exponent $\psi(\lambda)=\lambda - \int_0^\infty (1-\exp(-\lambda r))\Pi(\der r), \ \lambda\geq 0$, then conditional on the lifespan of the ancestor to be $x$, $X^{(T)}$ has the law of $\Y$, started at $x\wedge T$, reflected below $T$ and killed upon hitting $0$. 
Furthermore, conditional on extinction, 
$X$ has the law of $\Y$ started at $x$, conditioned on, and killed upon hitting $0$.
\end{theorem}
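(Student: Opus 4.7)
\medskip

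\noindent\textbf{Proof proposal.} The plan is to construct the JCCP path-by-path from the splitting tree and identify its law step by step from the branching structure. First I would fix the lifespan of the ancestor to be $x$ and work unconditionally (the truncation being handled afterwards by a separate step). Recall that by construction the JCCP starts at $x$ and, along the right-hand side of each branch, descends at unit speed until it encounters a birth event, at which point it jumps up by the lifespan of the new individual, explores the entire subtree rooted at that individual, and then resumes the descent exactly where it left off. Thus, in the JCCP's own time, the descent along a given branch proceeds at unit speed, and birth events of that branch are encountered in the \emph{reverse} chronological order along it.

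The key observation is that, by the defining property of the splitting tree, births along any branch occur according to a Poisson process of rate $b$ in chronological time; since the descent covers chronological time at unit speed, births are also encountered at rate $b$ in JCCP time, and the lifespans they bring in are i.i.d.\ of law $\Pi(\cdot)/b$ and independent of everything before. Between two such birth-jumps, the JCCP is piecewise linear with slope $-1$. When a birth occurs, the JCCP must spend some extra time exploring the subtree before resuming the descent of the current branch; by the branching property, this subtree is an independent copy of the whole splitting tree grafted on, and its exploration is itself an independent JCCP of the same nature. The natural induction to make rigorous is then: concatenating the jump brought by a new individual with the exploration of its subtree (all reduced in JCCP time) produces precisely one jump of size equal to that individual's lifespan, followed by an independent excursion of the same process. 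This identifies the JCCP as a spectrally positive Lévy process with drift $-1$ and Lévy measure $\Pi$, started at $x$ and killed at $\tau_0$, matching the stated Laplace exponent $\psi$.

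To handle the truncation at $T$, I would observe that forbidding individuals to live above $T$ amounts to replacing each lifespan $r$ by $r\wedge T$ at the level of the contour, which is exactly the deterministic operation of reflecting the path below $T$: whenever a candidate jump would cross $T$, the process is immediately restarted at $T$, and the descent resumes. Thanks to the Poisson structure of the jumps, this \emph{reflection below $T$} of the unreflected JCCP has the same law as the JCCP of the truncated tree. The ancestor's lifespan itself must also be truncated to $x\wedge T$, which accounts for the starting value in the statement.

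For the second assertion, conditional on $\mathrm{Ext}$ the splitting tree is almost surely finite and its JCCP has finite length $\tau_0<+\infty$; conversely, the event of almost sure finite exploration is exactly $\{\tau_0<+\infty\}$. So conditioning the tree on extinction is the same as conditioning $\Y$ on hitting $0$, started from $x$. When $m\leq 1$ this conditioning is trivial; when $m>1$ one uses the $h$-transform with $h(y)=\mathrm{e}^{-\eta y}$, i.e.\ the relation $P_x(\cdot\mid\tau_0<+\infty)\circ k_{\tau_0}^{-1} = \widetilde P_x \circ k_{\tau_0}^{-1}$ recalled in \eqref{eq:prob-cond-hit-0}, which identifies the conditioned process as the Lévy process with exponent $\widetilde\psi$. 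The main obstacle I anticipate is the rigorous induction sustaining the branching-property argument in the first paragraph: because the number of birth-jumps along a branch and the lifetimes of descendants may be unbounded, one has to set up the induction on the number of individuals explored, verify the measurability of the recursive decomposition, and check that the excursions away from level $x$ produced by the exploration agree in law with those of a spectrally positive Lévy process of drift $-1$ and Lévy measure $\Pi$. This is essentially a careful bookkeeping of independence and Poisson thinning, but it is where all the work lies.
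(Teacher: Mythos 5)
The paper does not prove this statement: it is imported verbatim from Lambert (2010) (Theorem 4.3 there) and used as a black box, so there is no internal proof to compare your attempt against. Judged on its own terms, your sketch reproduces the standard argument behind that result: the descent of the JCCP traverses chronological time at unit speed through a concatenation of partial branches, the birth events it meets form a rate-$b$ Poisson process in contour time with i.i.d.\ marks of law $\Pi(\cdot)/b$ (by independence of the birth processes of distinct individuals and the restriction property of Poisson processes), and the depth-first exploration of each grafted subtree matches the excursion of the putative Lévy process above the current level until its first return, which is what the branching property must deliver in the induction. This is the right skeleton, and you are candid that the recursive bookkeeping is where the work lies.

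Two points of imprecision, neither fatal. First, truncation at $T$ does not replace each lifespan $r$ by $r\wedge T$; it caps the \emph{death level} at $T$, so a jump from level $s$ of intended size $r$ lands at $(s+r)\wedge T$, i.e.\ the jump size becomes $r\wedge(T-s)$. Your next sentence (restart at $T$ whenever a jump would cross it) is the correct description and agrees with the reflection defined in \eqref{eq:reflect}, so only the first phrasing is off. Second, for the unconditioned, untruncated contour of a supercritical tree you should note that the path is still well defined on $[0,\infty)$ because the length explored by contour time $t$ is exactly $t$, hence involves a.s.\ finitely many individuals; on the survival event the contour simply never returns to $0$, which is consistent with reading ``killed upon hitting $0$'' as killing at the possibly infinite time $\tau_0$. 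The closing remark identifying the extinction-conditioned law with $\widetilde P_x$ via \eqref{eq:prob-cond-hit-0} is correct but not needed for the statement as given.
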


We state here without proof the following elementary lemma that is repeatedly used in the proofs.
\begin{lemma}\label{lemma:geom}
Let $Z$ be a r.v. in a probability space $(\Omega,\mathscr{F},\mathbf{P})$, taking values in a measurable space $(E,\mathcal{A})$.
Let $A\in\mathcal{A}$ be such that $p\coloneqq \mathbf{P}(Z\in A)\neq 0$.
Let $Z_1,Z_2,\ldots$ be a sequence of i.i.d. r.v. distributed as $Z$, and set $N\coloneqq \inf\{n: Z_n\in A\}$.
Then we have the following identity in distribution,
\[
\left(Z_1,\ldots,Z_N\right) \,{\buildrel d \over =}\, \left(Z^\prime_1,\ldots,Z^\prime_{G},Z^{\prime\prime}_{G+1}\right) 
\]
where 
\begin{itemize}
\item $Z^\prime_1,\ldots Z^\prime_{G}$ are i.i.d. r.v. of probability distribution $\mathbf{P}\left(Z\in \cdot \middle\vert Z\notin A\right)$
\item $Z^{\prime\prime}_{G+1}$ is an independent r.v. distributed as $\mathbf{P}\left(Z\in \cdot \middle\vert Z\in A\right)$
\item $G$ is independent of $Z^\prime_1,\ldots,Z^\prime_{G},Z^{\prime\prime}_{G+1}$, and has geometric distribution with parameter $p$, i.e. $\mathbf{P}(G=k)=(1-p)^{k}p$, for $k\geq 0$.
\end{itemize} 
\end{lemma}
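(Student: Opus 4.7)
The plan is to identify both sides through their finite-dimensional distributions, conditioning on the value of $N$ (resp.\ $G$). Since both $(Z_1,\ldots,Z_N)$ and $(Z'_1,\ldots,Z'_G,Z''_{G+1})$ are random sequences of random length, it suffices to check, for every fixed $k \geq 0$ and every choice of measurable rectangle sets $B_1,\ldots,B_k \in \mathcal{A}$ and $B \in \mathcal{A}$ with $B \subseteq A$, that
\[
\mathbf{P}\!\left(N = k+1,\, Z_1 \in B_1,\ldots, Z_k \in B_k,\, Z_{k+1} \in B\right)
\]
coincides with the analogous probability on the right-hand side. A standard monotone-class / $\pi$--$\lambda$ argument then extends the equality of distributions from measurable rectangles to the product $\sigma$-field, which suffices since the length is measurable with respect to these.

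The key computation I would carry out is the following. By definition of $N$, the event $\{N = k+1\}$ is the intersection $\{Z_1 \notin A,\ldots, Z_k \notin A,\, Z_{k+1} \in A\}$. Combining this with the prescribed rectangle and using independence and identical distribution of the $Z_i$, the joint probability factors as
\[
\left(\prod_{i=1}^k \mathbf{P}(Z \in B_i \cap A^c)\right) \cdot \mathbf{P}(Z \in B),
\]
where I have used $B \subseteq A$ to absorb the constraint $Z_{k+1} \in A$ into $\{Z_{k+1} \in B\}$. Multiplying and dividing each factor of the product by $1-p$ and the last factor by $p$, this rewrites as
\[
(1-p)^k\, p \;\cdot\; \prod_{i=1}^k \mathbf{P}(Z \in B_i \mid Z \notin A) \;\cdot\; \mathbf{P}(Z \in B \mid Z \in A).
\]

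On the right-hand side, independence of $G$ from the $Z'_i$ and $Z''_{k+1}$ together with the prescribed laws gives immediately
\[
\mathbf{P}\!\left(G = k,\, Z'_1 \in B_1,\ldots, Z'_k \in B_k,\, Z''_{k+1} \in B\right)
= (1-p)^k p \cdot \prod_{i=1}^k \mathbf{P}(Z \in B_i \mid Z \notin A) \cdot \mathbf{P}(Z \in B \mid Z \in A),
\]
matching the previous display. Since the length variables $N$ and $G+1$ are measurable functions of the respective sequences (the length is recovered as $1$ plus the number of initial coordinates), this rectangle-level identity pins down the joint law.

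I do not expect any genuine obstacle here: the only care needed is in parametrising random-length sequences (for instance by viewing both sides as measures on $\bigsqcup_{n \geq 1} E^n$ and checking equality on each stratum $E^{k+1}$), but this is entirely routine. The lemma is essentially the observation that the conditional law of $(Z_1,\ldots,Z_{N-1},Z_N)$ given $N=k+1$ is, by the independence and i.i.d.\ structure, the product of $k$ copies of $\mathcal{L}(Z \mid Z \notin A)$ and one copy of $\mathcal{L}(Z \mid Z \in A)$, while the unconditional law of $N-1$ is geometric with parameter $p$.
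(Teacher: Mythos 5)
Your argument is correct. Note that the paper itself states this lemma explicitly without proof (``We state here without proof the following elementary lemma''), so there is no in-paper argument to compare against; your computation --- conditioning on $\{N=k+1\}=\{Z_1\notin A,\ldots,Z_k\notin A,\ Z_{k+1}\in A\}$, factoring by independence, and matching the resulting mass $(1-p)^k p$ times the product of conditional laws against the right-hand side --- is exactly the standard verification the authors had in mind. The only point worth making explicit is that restricting the final rectangle to $B\subseteq A$ loses nothing, since on each stratum $E^{k+1}$ both laws are carried by $E^k\times A$ (on the left because $Z_N\in A$ by definition of $N$, on the right because $Z''_{G+1}$ is distributed as $\mathbf{P}(Z\in\cdot\mid Z\in A)$), and such rectangles generate the trace $\sigma$-field there; summing the masses $(1-p)^k p$ over $k\geq 0$ also confirms that $N<\infty$ almost surely, so both sides are genuine probability measures on $\bigsqcup_{n\geq 1}E^n$.
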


\section{Results}\label{sec:results}

From now on we consider a finite measure $\Pi$, on $(0,+\infty)$, of mass $b$ and $m\coloneqq\int_0^{+\infty}r\Pi(\der r)$ and $\Y$ a spectrally positive Lévy process with drift coefficient $d=-1$, Lévy measure $\Pi$, and Laplace exponent denoted by $\psi$, i.e.,
\begin{equation}
\psi(\lambda)=\lambda - \int\limits_0^\infty (1-\e^{-\lambda r})\Pi(\der r), \ \lambda\geq 0 \label{eq:Lap_exp}
\end{equation}

As in the preliminaries, we denote by $P_x$ the law of the process conditional on $\Y_0=x$, by $W$ the corresponding scale function and by $\eta$ the largest root of $\psi$. For any $s\in \R_+$, denote by $\Y^{(s)}$ the process reflected below $s$, as defined in the preliminaries.

We also recall the definition of the measure $\widetilde\Pi(\der y)\coloneqq \e^{-\eta y} \Pi(\der y)$ on $(0,+\infty)$. Then $\widetilde\psi, \widetilde m, \widetilde{W}, \widetilde{P}, \widetilde{Y}$, will denote the Laplace exponent, the mean value, the scale function, the law, and the process itself with Lévy measure $\widetilde\Pi$. 
As we have seen before, this spectrally positive Lévy process with Laplace exponent $\widetilde\psi$, killed when it hits 0, has the same law as $(Y_t,0\leq t<\tau_0)$ conditioned on hitting $0$, when starting from any $x>0$, that is $\widetilde P_x\circ k^{-1}_{\tau_0} = P_x(\cdot\vert \tau_0<+\infty)\circ k^{-1}_{\tau_0}$.
\newline

Fix $p\in(0,1)$ and $T\in(0,+\infty)$. 
We define a forest $\f^p$ consisting in $N_p+1$ splitting trees with lifespan measure $\Pi$ as follows,
\[\f^p \coloneqq\left(\ctree_1,\ldots,\ctree_{N_p},\ctree_{N_p+1}\right)\]
where,
\medskip
\begin{itemize}
\item $\ctree_1,\ldots\ctree_{N_p}$ are i.i.d. splitting trees conditioned on extinction before $T$
\item $\ctree_{N_p+1}$ is an independent splitting tree conditioned on survival up until time $T$
\item $N_p$ is a geometric random variable with parameter $p$, independent of all trees in the sequence, i.e. $\Pro(N_p=k)=(1-p)^kp$, for $k\geq 0$.
\end{itemize} 

Notice that, if $p=\Pro(\xi_T(\ctree)\neq 0)$, then $\f^p$ has the same distribution as a sequence of i.i.d. splitting trees, stopped at its first element surviving up until time $T$ (see Lemma~\ref{lemma:geom}). Hereafter we will frequently make use of this identity in law.
\medskip

We will add a subscript to denote equally constructed forests, but where the i.i.d. lifetimes of the ancestors on the splitting trees are different from that of their descendants. More precisely we will refer to $\f_\bot^p$ or $\f_\top^p$ if the ancestors are distributed respectively as the overshoot and undershoot defined by (\ref{eq:overshoot}) and (\ref{eq:undershoot}), conditional on $\{\tau_0^+<+\infty\}$, i.e.,
\[
\bot:\ \zeta(\varnothing) \sim \mu_\bot \qquad \qquad \qquad 
\top:\ \zeta(\varnothing) \sim \mu_\top
\]
We follow the same convention for splitting trees, that is, $\ctree_\top$ and $\ctree_\bot$ denote trees starting from one ancestor with these distributions, as well as for their probability laws, denoted by $\Pro_\top,\Pro_\bot$.

Finally, we use the notation $\widetilde{\f}^p$ when the lifespan measure of all individuals is $\widetilde\Pi$, instead of $\Pi$. The addition of a subscript is assumed to affect the lifetime distribution of the ancestors in the exact same way as described before.

\medskip

We start with the following result, which is the extension of Propositions~\ref{prop:duality} and \ref{theo:lambert-2010} to the case of these splitting trees with size-biased ancestors. Its proof is given later in the Appendix.
\begin{lemma}\label{lemma:contour-trees}
Let $\ctree_\bot$ be a splitting tree and $Y$ a spectrally positive Lévy process, as defined in Section~\ref{sec:results}, then the contour process $\Co=\Co(\ctree_\bot)$ has the following properties

\begin{enumerate}[label={\upshape(\roman*)}]
\item Under $\Pro_\bot\left(\cdot\middle\vert \textrm{Ext} \right)$, $\Co$ has the same distribution as $\left(\Y_{\tau_0^++t}, 0\leq t\leq \tau_0-\tau_0^+\right)$ under $P_0\left(\cdot\middle\vert \tau_0<+\infty\right)$.
\item Under $\Pro_\bot\left(\cdot\middle\vert \xi_T=0\right)$, $\Co$ has the distribution of $\left(\Y_{\tau_0^++t}, 0\leq t\leq \tau_0-\tau_0^+\right)$ under $P_0\left(\cdot\middle\vert \tau_0<\tau_T^+,\tau_0^+<+\infty\right)$.
\item Under $\Pro_\bot$, the contour of the truncated tree $\ctree_\bot^{(T)} $, is distributed as $\left(\Y_{\tau_0^++t}, 0\leq t\leq \tau_0-\tau_0^+\right)$ under $P_0\left(\cdot\middle\vert \tau_0^+<+\infty\right)$, reflected at $T$ and killed upon hitting 0.
\end{enumerate}
\end{lemma}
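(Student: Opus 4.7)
The plan is to leverage Theorem~\ref{theo:lambert-2010} to describe $\Co$ conditionally on the ancestor's lifetime $\zeta(\varnothing)$, and then to match the resulting mixture over $\zeta(\varnothing)\sim\mu_\bot$ with the post-$\tau_0^+$ behavior of $\Y$ via the strong Markov property of $\Y$ at $\tau_0^+$, using the very definition of $\mu_\bot$ as the conditional law of $\Y_{\tau_0^+}$ in (\ref{eq:overshoot}). Throughout, the Bayesian reweighting of $\mu_\bot$ needed to handle the various conditionings will be read off from the explicit formulas $P_x(\tau_0<+\infty)=\e^{-\eta x}$ and $P_x(\tau_0<\tau_T^+)=W(T-x)/W(T)$ from (\ref{eq:exit-two-side}).

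For part (i), I would first condition on $\zeta(\varnothing)=x$: by Theorem~\ref{theo:lambert-2010}, $\Co$ under $\Pro_\bot(\cdot\,|\,\textrm{Ext},\zeta(\varnothing)=x)$ is distributed as $\Y$ under $P_x(\cdot\,|\,\tau_0<+\infty)\circ k^{-1}_{\tau_0}$. Integrating against the conditional law of $\zeta(\varnothing)$ under $\Pro_\bot(\cdot\,|\,\textrm{Ext})$, which by Bayes equals $\mu_\bot(\der x)\e^{-\eta x}/\int\mu_\bot(\der y)\e^{-\eta y}$ (using $P_x(\textrm{Ext})=\e^{-\eta x}$), yields an explicit mixture. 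On the Lévy side, strong Markov at $\tau_0^+$ gives that on $\{\tau_0^+<+\infty\}$, $\Y_{\tau_0^+}\sim\mu_\bot$ and the shifted process $(\Y_{\tau_0^++t})_{t\geq 0}$ is an independent Lévy process started from $\Y_{\tau_0^+}$; since $\{\tau_0<+\infty\}\subset\{\tau_0^+<+\infty\}$, the further conditioning on $\{\tau_0<+\infty\}$ reweights the overshoot density by $\e^{-\eta x}$ and leaves the shifted process with law $P_x(\cdot\,|\,\tau_0<+\infty)\circ k^{-1}_{\tau_0}$. The two mixtures coincide.

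Part (ii) is obtained by the same scheme, replacing $\{\tau_0<+\infty\}$ by $\{\tau_0<\tau_T^+\}$ and using the reweighting factor $W(T-x)/W(T)$. The event $\{\xi_T=0\}$ at the tree level translates, conditionally on $\zeta(\varnothing)=x$, into $\{\tau_0<\tau_T^+\}$ for the JCCP via Theorem~\ref{theo:lambert-2010} (with $x\leq T$ automatically enforced since an ancestor living past $T$ would imply $\xi_T\geq 1$), mirroring the fact that $\{\tau_0<\tau_T^+\}\subset\{\Y_{\tau_0^+}\leq T\}$ on the Lévy side. For part (iii) no Bayesian correction is needed: Theorem~\ref{theo:lambert-2010} gives that $\Co(\ctree_\bot^{(T)})$ conditional on $\zeta(\varnothing)=x$ is $\Y$ started at $x\wedge T$, reflected below $T$, and killed at $0$; on the Lévy side, the shifted process after $\tau_0^+$ starts at $\Y_{\tau_0^+}\sim\mu_\bot$ and, once reflected at $T$ in the sense of Remark~\ref{rem:reflect}, effectively starts at $\Y_{\tau_0^+}\wedge T$. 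Integrating against $\mu_\bot$ on both sides produces the same mixture.

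The main technical delicacy lies in the bookkeeping of conditional densities in (i)--(ii): one must verify that the reweighting of $\mu_\bot$ produced by Bayes' rule on the Lévy side matches exactly the conditional law of $\zeta(\varnothing)$ under $\Pro_\bot$ conditioned on the corresponding tree-level event, and that the normalizing constants agree (for instance, $\Pro_\bot(\textrm{Ext})=\int\mu_\bot(\der x)\e^{-\eta x}=P_0(\tau_0<+\infty\,|\,\tau_0^+<+\infty)$). All of these identities follow transparently from the defining property (\ref{eq:overshoot}) of $\mu_\bot$ combined with the strong Markov property of $\Y$ at $\tau_0^+$.
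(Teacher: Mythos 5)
Your proposal is correct and follows essentially the same route as the paper's proof: condition on the ancestor's lifespan, invoke Theorem~\ref{theo:lambert-2010} for the conditional law of the contour, and match the resulting mixture with the post-$\tau_0^+$ law of $\Y$ via the strong Markov property and the definition of $\mu_\bot$ as the overshoot law (the paper writes out only (i) and declares (ii)--(iii) similar). If anything, you are more explicit than the paper's displayed computation about the Bayesian reweighting of $\mu_\bot$ by $\e^{-\eta x}$ (resp.\ $W(T-x)/W(T)$) on both the tree and Lévy sides, which is exactly the point where care is needed for the two mixtures to agree.
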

\medskip

Define now the two parameters,
\[
\gamma\coloneqq\frac{1}{W(T)} \qquad \qquad \textrm{and} \qquad \qquad \widetilde\gamma\coloneqq\frac{1}{\widetilde W(T)}.
\]
We have the following two results on forests,
\begin{lemma}\label{lemma:gamma-tilde-gamma}
\[
\widetilde\gamma = \Pro_\bot\left(\xi_T\neq 0\right) \qquad \textrm{and} \qquad \gamma = \widetilde\Pro_\top\left(\xi_T\neq 0\right)
\]
\end{lemma}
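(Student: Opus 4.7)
The plan is to compute each survival probability via the contour of the truncated tree, which by Theorem~\ref{theo:lambert-2010} is a reflected Lévy process. The event $\{\xi_T\neq 0\}$ is then the event that this reflected process hits the level $T$, whose probability is controlled by the two-sided exit formula~\eqref{eq:exit-two-side}. After integrating against the ancestor's distribution, the convolution identities~\eqref{eq:convol-W}-\eqref{eq:convol-W-tilde} close the computation. The single algebraic tool needed beyond what is in the excerpt is the relation $W(x)=\e^{\eta x}\widetilde W(x)$, which follows instantly from $1/\widetilde\psi(\lambda)=1/\psi(\lambda+\eta)$ by inverting the Laplace transform. I will work in the supercritical regime $m>1$, so that $\eta>0$ and $m\wedge 1=1$.

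For the first identity, conditionally on the ancestor's lifetime $v\sim\mu_\bot$, Theorem~\ref{theo:lambert-2010} (or equivalently Lemma~\ref{lemma:contour-trees}(iii)) says the JCCP of $\ctree_\bot^{(T)}$ has the law of $Y$ started at $v\wedge T$, reflected below $T$, killed at $0$. Hitting $T$ is automatic on $\{v\geq T\}$ and, by~\eqref{eq:exit-two-side}, has conditional probability $1-W(T-v)/W(T)$ on $\{v<T\}$. Integrating against $\mu_\bot(\der v)=\e^{\eta v}\overline{\widetilde\Pi}(v)\,\der v$ yields
\[
\Pro_\bot(\xi_T\neq 0)=1-\frac{1}{W(T)}\int_0^T W(T-v)\,\e^{\eta v}\,\overline{\widetilde\Pi}(v)\,\der v.
\]
Inserting $W(T-v)=\e^{\eta(T-v)}\widetilde W(T-v)$, the exponentials merge into $\e^{\eta T}$, the remaining integral equals $\widetilde W(T)-1$ by~\eqref{eq:convol-W-tilde}, and since $W(T)=\e^{\eta T}\widetilde W(T)$ the $\e^{\eta T}$ cancels, leaving $1-(\widetilde W(T)-1)/\widetilde W(T)=1/\widetilde W(T)=\widetilde\gamma$.

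The second identity is proved symmetrically with the roles of $Y$ and $\widetilde Y$ exchanged: applying Theorem~\ref{theo:lambert-2010} to the splitting tree $\widetilde\ctree_\top^{(T)}$ whose descendants have lifespan measure $\widetilde\Pi$, and conditioning on the ancestor's lifetime $v\sim\mu_\top$, the JCCP has the law of $\widetilde Y$ started at $v\wedge T$, reflected below $T$, killed at $0$. Using $\mu_\top(\der v)=\e^{-\eta v}\overline{\Pi}(v)\,\der v$, the dual exit formula $\widetilde P_v(\tau_0<\tau_T^+)=\widetilde W(T-v)/\widetilde W(T)$, the substitution $\widetilde W(T-v)=\e^{-\eta(T-v)}W(T-v)$, and~\eqref{eq:convol-W} gives
\[
\widetilde\Pro_\top(\xi_T\neq 0)=1-\frac{\e^{-\eta T}(W(T)-1)}{\widetilde W(T)}=1-\frac{W(T)-1}{W(T)}=\frac{1}{W(T)}=\gamma.
\]

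There is no serious obstacle: both identities collapse to one-line integrals once the exponential tilt $W\leftrightarrow\widetilde W$ is used to turn the integral produced by the over/undershoot density into the canonical form evaluated by~\eqref{eq:convol-W-tilde} or~\eqref{eq:convol-W}. The only small bookkeeping point is the mass $\mu_\bot([T,+\infty))$ (and its $\top$-counterpart), which combines with $\int_0^T\mu_\bot(\der v)(1-W(T-v)/W(T))$ to produce the leading $1$ in each display.
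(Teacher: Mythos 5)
Your proof is correct and follows essentially the same route as the paper's: express $\{\xi_T\neq 0\}$ as a first-passage event for the contour, integrate the two-sided exit probability \eqref{eq:exit-two-side} against the overshoot/undershoot density, and use the tilt $W(x)=\e^{\eta x}\widetilde W(x)$ together with \eqref{eq:convol-W}--\eqref{eq:convol-W-tilde}; the paper merely phrases the first computation as $P_0(\tau_0<\tau_T^+)$ decomposed at $\tau_0^+$ rather than conditioning directly on the ancestor's lifespan, which is the same integral. Your explicit restriction to $m\geq 1$ (so that $m\wedge 1=1$ in the normalizations of $\mu_\bot,\mu_\top$) matches the paper's implicit assumption, as Remark~\ref{rem:subcrit-case} shows the second identity indeed fails when $m<1$.
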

\begin{proof}\let\qed\relax
See Appendix.
\end{proof}

\begin{lemma}\label{lemma:f-etoile}
In the supercritical and critical cases ($m\geq1$) we have,

$\f^{\tilde\gamma}_\bot \ \,{\buildrel d \over =}\, \ \f^* \coloneqq$ a sequence of i.i.d. splitting trees with law $\Pro_\bot$ stopped at the first tree having survived up to time $T$.

$\widetilde\f^{\gamma}_\top\ \,{\buildrel d \over =}\, \ \widetilde\f^* \coloneqq$ a sequence of i.i.d. splitting trees with law $\widetilde\Pro_\top$ stopped at the first tree having survived up to time $T$.
\end{lemma}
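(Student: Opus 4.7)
The plan is to read both identities off directly from Lemma~\ref{lemma:geom} once the survival probability of a single tree under each ancestor law has been identified. The substantive content---relating these survival probabilities to the scale-function constants $\tilde\gamma$ and $\gamma$---is already packaged in Lemma~\ref{lemma:gamma-tilde-gamma}, so the remaining work is essentially bookkeeping: match the definition of $\f^p$ recalled just above with the decomposition supplied by Lemma~\ref{lemma:geom}.

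For the first identity, I would take an i.i.d.\ sequence $(Z_n)_{n \geq 1}$ of splitting trees with common law $\Pro_\bot$, let $A \coloneqq \{\xi_T \neq 0\}$ denote the survival-up-to-$T$ event, and set $N \coloneqq \inf\{n : Z_n \in A\}$. By construction $(Z_1,\ldots,Z_N)$ is precisely $\f^*$. Lemma~\ref{lemma:gamma-tilde-gamma} gives $\Pro_\bot(A) = \tilde\gamma$, and since $\widetilde W(T) \in [1,+\infty)$ for any finite $T>0$ we have $\tilde\gamma \in (0,1]$, so the hypothesis $p>0$ of Lemma~\ref{lemma:geom} is satisfied. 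Applying that lemma with $p = \tilde\gamma$ identifies $\f^*$ in distribution with a sequence of $G$ i.i.d.\ trees of law $\Pro_\bot(\cdot \mid \xi_T = 0)$ followed by one independent tree of law $\Pro_\bot(\cdot \mid \xi_T \neq 0)$, where $G$ is an independent geometric random variable of parameter $\tilde\gamma$. This concatenation is exactly $\f^{\tilde\gamma}_\bot$.

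The second identity follows from the same three lines with $\Pro_\bot$ replaced by $\widetilde\Pro_\top$: the second half of Lemma~\ref{lemma:gamma-tilde-gamma} yields $\widetilde\Pro_\top(A) = \gamma \in (0,1]$, and Lemma~\ref{lemma:geom} again delivers $\widetilde\f^* \,{\buildrel d \over =}\, \widetilde\f^{\gamma}_\top$. I do not expect a genuine obstacle inside the proof of Lemma~\ref{lemma:f-etoile} itself: the role of the assumption $m \geq 1$ is simply to make $\mu_\bot$ (resp.\ $\mu_\top$) the natural forward-in-time ancestor distribution, since in that regime $P_0(\tau_0^+ < +\infty) = 1 \wedge m = 1$ and no further conditioning is needed. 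All the non-trivial work lives upstream in Lemma~\ref{lemma:gamma-tilde-gamma}, where the duality between $\psi$ and $\widetilde\psi$, the convolution identities \eqref{eq:convol-W}--\eqref{eq:convol-W-tilde}, and the size-biased ancestor laws $\mu_\bot, \mu_\top$ actually enter.
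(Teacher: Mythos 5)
Your proposal is correct and follows essentially the same route as the paper: the paper's proof likewise reads the decomposition off Lemma~\ref{lemma:geom} and reduces everything to the identification of the success probabilities $\Pro_\bot(\xi_T\neq 0)=\widetilde\gamma$ and $\widetilde\Pro_\top(\xi_T\neq 0)=\gamma$ supplied by Lemma~\ref{lemma:gamma-tilde-gamma}. Your side remark on where $m\geq 1$ enters (it makes $P_0(\tau_0^+<+\infty)=1$, so the ancestor laws and the convolution identities yield exactly $\widetilde\gamma$ and $\gamma$) is consistent with the paper's Remark~\ref{rem:subcrit-case}.
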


\begin{proof}
By definition, a forest $\f^*$ consists in a number of trees, say $\widetilde N+1$, where $\widetilde N$ is a geometric random variable with probability of success $\Pro_\bot (\xi_T \neq 0)$, counting the trees that die out before $T$, until there is one that survives.
Hence, thanks to Lemma~\ref{lemma:geom}, the only thing that remains to prove is that $\widetilde\gamma$ is exactly this probability of success for the forest $\f^*$, in the same way that $\gamma$ for the forest $\widetilde\f^*$, which is the statement in Lemma~\ref{lemma:gamma-tilde-gamma}. 
\end{proof}

Then we are ready to state our first result concerning the population size processes of these forests,

\begin{theorem}\label{theo:main}
We have the following identity in distribution,
\[\left(\xi_{T-t}\left(\f^{\tilde\gamma}_\bot\right), 0\leq t\leq T\right)\,{\buildrel d \over =}\,\left(\xi_{t}\left(\widetilde\f^{\gamma}_\top\right), 0\leq t\leq T\right)\]
In the subcritical and critical cases ($m\leq 1$),
\[\left(\xi_{T-t}\left(\f^{\gamma}_\bot\right), 0\leq t\leq T\right)\,{\buildrel d \over =}\,\left(\xi_{t}\left(\f^{\gamma}_\top\right), 0\leq t\leq T\right)\]
and actually in this case $\bot=\top$ since in both cases, $\zeta(\varnothing)$ has density $\dfrac{\overline\Pi(r)}{m}\der r$.

In the supercritical and critical cases ($m\geq 1$) we have
\[\left(\xi_{T-t}\left(\f^*\right), 0\leq t\leq T\right)\,{\buildrel d \over =}\,\left(\xi_{t}\left(\widetilde\f^*\right), 0\leq t\leq T\right)\]
\end{theorem}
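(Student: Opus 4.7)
The strategy is to leverage the JCCP encoding and the space–time reversal duality of Proposition~\ref{prop:duality}.

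\emph{Reductions.} I first reduce the three identities to a single one. In the supercritical and critical regime, Lemma~\ref{lemma:f-etoile} gives $\f^* \stackrel{d}{=} \f^{\tilde\gamma}_\bot$ and $\widetilde\f^* \stackrel{d}{=} \widetilde\f^\gamma_\top$, so the third identity is immediate from the first. In the subcritical and critical regime, $m\le 1$ implies $\eta = 0$, hence $\widetilde\Pi = \Pi$, $\widetilde Y = Y$, $\widetilde W = W$ and $\widetilde\gamma = \gamma$; inspecting (\ref{eq:undershoot})--(\ref{eq:overshoot}) also gives $\mu_\bot = \mu_\top = \overline\Pi(\cdot)/m\,d\cdot$, so $\widetilde\f^\gamma_\top = \f^\gamma_\top$ and $\f^{\tilde\gamma}_\bot = \f^\gamma_\bot$, and the second identity is a specialisation of the first. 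It thus suffices to prove the first identity, in the general case.

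\emph{Width as local time and the transformation.} Let $\mathcal C := \Co(\f^{\tilde\gamma}_\bot)$. By concatenation of JCCPs, Lemma~\ref{lemma:contour-trees}(iii), Lemma~\ref{lemma:geom} and the strong Markov property, $\mathcal C$ has the law of $Y$ started at $0$, reflected below $T$, and killed at the first return $g_0$ to $0$ after its first visit to $T$. The width-to-local-time identity $\xi_s(\f) = \Gamma_s(\Co(\f))$ reduces the theorem to constructing a path transformation $\Phi$ such that (a) $\Gamma_s(\Phi(\mathcal C)) = \Gamma_{T-s}(\mathcal C)$ a.s.\ for every $s \in [0,T]$ and (b) $\Phi(\mathcal C) \stackrel{d}{=} \Co(\widetilde\f^\gamma_\top)$. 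I would take $\Phi := \rho \circ \chi$, where $\chi$ is the cycle shift from the preliminaries (associated with $\overline g_T$ and $g_0$), which brings the terminal descent from $T$ to $0$ inside the surviving tree to the front and forces $\chi(\mathcal C)_0 = \mathcal C_{\overline g_T} = T$, and $\rho$ is the space–time reversal at lifetime. Property (a) is then automatic: $\chi$ is a rearrangement that preserves level counts, and the general identity $\Gamma_s(\rho Z) = \Gamma_{Z_0 - s}(Z)$ gives $\Gamma_s(\rho\chi(\mathcal C)) = \Gamma_{T-s}(\chi(\mathcal C)) = \Gamma_{T-s}(\mathcal C)$.

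\emph{The distributional identity.} Property (b) is the technical core. I would decompose $\mathcal C$ into its successive excursions above $0$ (reflected below $T$): by Lemma~\ref{lemma:geom} they split as a geometric number (with parameter $\widetilde\gamma = \Pro_\bot(\xi_T \neq 0)$ by Lemma~\ref{lemma:gamma-tilde-gamma}) of extinct excursions staying below $T$, followed by the surviving excursion that reaches $T$. Applying $\chi$ then $\rho$ reverses this sequence: the new initial segment is the reversed terminal descent, and the remaining i.i.d.\ excursions are reversed as well. Proposition~\ref{prop:duality}(ii)--(iii), combined with the identity (\ref{eq:prob-cond-hit-0}), implies that reversing an excursion of $Y$ above $0$ conditionally on hitting $0$ produces an excursion of the dual spectrally positive Lévy process $\widetilde Y$; hence each reversed extinct excursion is distributed as an excursion of $\widetilde Y$ under $\widetilde\Pro_\top(\cdot | \xi_T = 0)$. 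The initial-height distribution of $\rho\chi(\mathcal C)$, namely $T - \mathcal C_{\overline g_T -}$, is computed from (\ref{eq:under-overshoot-0}) and, with the help of the convolution identities (\ref{eq:convol-W})--(\ref{eq:convol-W-tilde}), is shown to coincide with $\widetilde\mu_\top$ (truncated at $T$). By Lemma~\ref{lemma:contour-trees}(iii) applied to $\widetilde Y$, this identifies $\rho\chi(\mathcal C)$ as the JCCP of $\widetilde\f^\gamma_\top$, establishing (b).

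\emph{Main obstacle.} The delicate part is the explicit excursion-by-excursion matching of laws under $\rho$, and in particular showing that the reversed terminal descent produces the ancestor-length distribution $\widetilde\mu_\top$ of the dual forest rather than the original $\mu_\top$ or $\mu_\bot$. This requires juggling simultaneously Proposition~\ref{prop:duality}(iii) (overshoot/undershoot swap), the exponential change of measure $e^{-\eta(Y_t - x)}$ that relates $P$ to $\widetilde P$, the two-sided exit formula (\ref{eq:under-overshoot-0}), and the convolution identities (\ref{eq:convol-W})--(\ref{eq:convol-W-tilde}). Once these accounting identities are in place, the coupled $\bot \leftrightarrow \top$ and $Y \leftrightarrow \widetilde Y$ swap falls out, and invariance of local time under $\chi$ and $\rho$ translates (b) into the desired equality in law of the width processes.
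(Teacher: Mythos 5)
Your proposal follows essentially the same route as the paper: property (b) is precisely Theorem~\ref{theo:main1}, i.e.\ $\K\left(\Co^{(T)}(\f^{\tilde\gamma}_\bot)\right)\,{\buildrel d \over =}\,\Co^{(T)}(\widetilde\f^{\gamma}_\top)$ with $\K=\rho\circ\chi$, proved there by the same excursion decomposition and duality arguments you outline, and Theorem~\ref{theo:main} is then deduced exactly as in your step (a) from the invariance of the local time $\Gamma$ under $\K$ together with $\Gamma\circ\Co=\Xi$ and the same reductions for $m\le 1$ and $m\ge 1$. One small caution: the ancestor distribution of the dual forest is the measure $\mu_\top$ of (\ref{eq:undershoot}), built from the original $\Pi$ and $\eta$ (this is what the subscript $\top$ on $\widetilde\f^{\gamma}_\top$ means in the paper), not a separate ``$\widetilde\mu_\top$'' recomputed from $\widetilde\Pi$, which would differ when $\eta>0$.
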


\begin{remark}
Theorem~\ref{theo:intro} from the Introduction is a particular case of this theorem when $\Pi$ is exponential.
\end{remark}

\begin{remark}\label{rem:subcrit-case}
When $m<1$, $\widetilde\f^{\gamma}_\bot$ has no interpretation as a stopped sequence of i.i.d. splitting trees as in Lemma~\ref{lemma:f-etoile}, because $\gamma\neq \widetilde\Pro_\top\left(\xi_T\neq 0\right)$.
Indeed, in this case
\begin{eqnarray}
&&P_0\left(\tau_0<\tau_T^+\middle\vert\tau_0^+<+\infty\right) 
= \int\limits_0^T
P_v\left(\tau_0<\tau^+_T\right) P_0\left(Y_{\tau_0^+}\in \der v\middle\vert\tau_0^+<+\infty\right)
\nonumber \\
&&\qquad = \int\limits_0^T \dfrac{W(T-v)}{W(T)} \dfrac{\overline{\Pi}(v)}{m} \der v = \dfrac{1}{m}\left(1-\dfrac{1}{ W(T)}\right) \nonumber
\end{eqnarray}
where the last equality comes from (\ref{eq:convol-W}). 
This entails that in this case the number of trees on the forest $\widetilde\f^*$ is geometric with parameter
\[
1 - \dfrac{1-\frac{1}{ W(T)}}{1-\frac{1}{W(\infty)}} \neq \gamma.
\]
\end{remark}

\bigskip
Actually, we will state a more general equality in distribution, concerning not only the underlying population size processes of the forests, but the two \textit{dual} forests themselves (see Fig.~\ref{fig:dual-forest}). 
For a forest $\f$ consisting of $N$ chronological trees that go extinct before $T$, and an $(N +1)$-st tree $\ctree_{N+1}$ that reaches time $T$, truncated up to this time, its \textit{dual} forest (in reverse time) can be defined as follows: its roots are the individuals of $\f$ extant at $T$, birth events become death events and vice versa, and the parental relations are re-drawn from the top of edges to the right (when looking in the original time direction), such that daughters are now to the left of their mothers. This rule is applied to all edges, except for those which are to the right of the last individual that survives up until time $T$, that are translated to the left of the first ancestor before re-drawing the parental relations, as it is shown in Fig.~\ref{fig:dual-forest}.
Hence, we postpone the proof of Theorem~\ref{theo:main}, that will be established later as a consequence of this more general result.

\begin{center}
\begin{figure}
\includegraphics[scale=1]{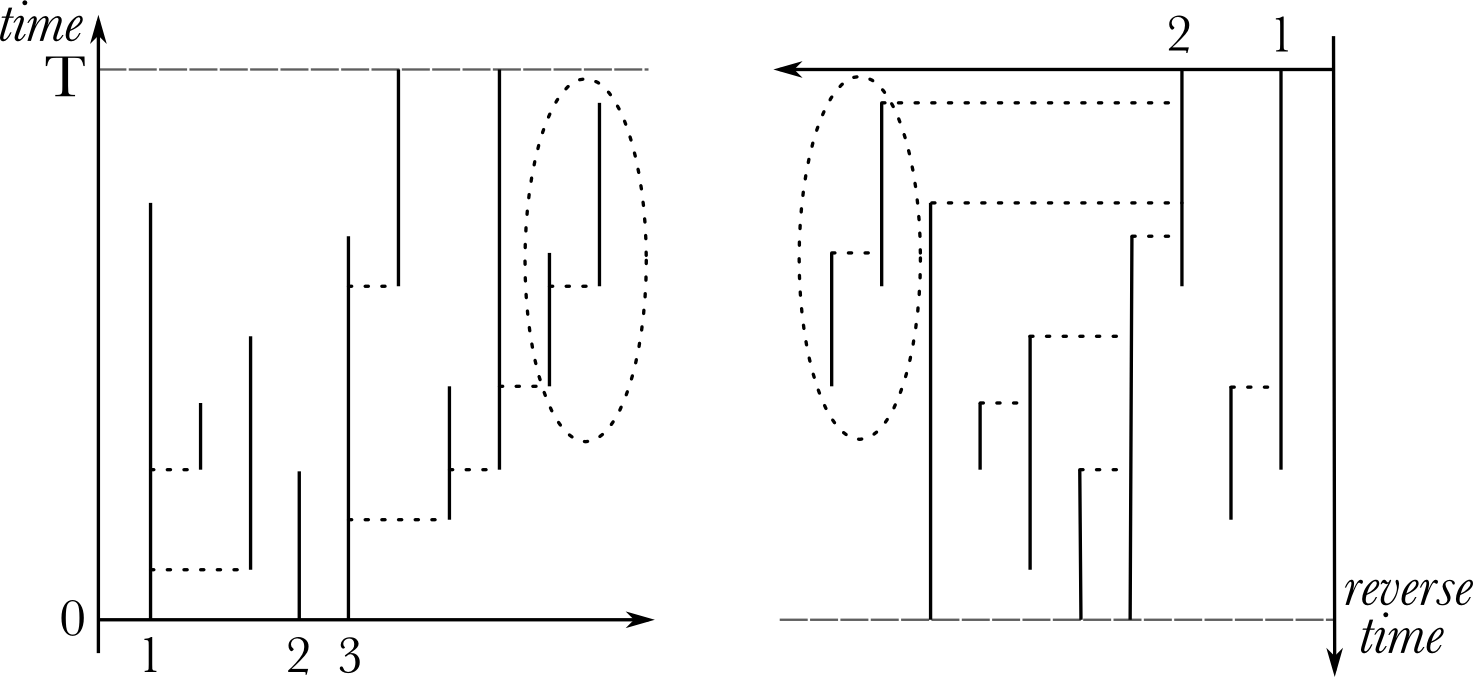}
\caption{An example of forest $\f^*$ consisting in three chronological trees (left) and its dual (right).}
\label{fig:dual-forest}
\end{figure}
\end{center}

Hereafter, we will consider the contour process of a forest $\f$ truncated at $T$ (i.e. each tree is truncated at $T$) for which we use the following notation $\Co^{(T)} (\f) = \Co(\f^{(T)})$.
Also define for a càdlàg process $X$, a deterministic transformation of its path by 
\[
\K(X)\coloneqq\rho\circ \chi(X)
\] 
which exists when $g_0(X)<+\infty$ (see preliminaries). This operator has the effect of shifting the last excursion from $T$ to 0 to the left and then apply the space-time-reversal of the process as defined in the preliminaries.

\begin{theorem}\label{theo:main1} 
Let $\Co^{(T)} (\f^{\tilde\gamma}_\bot) $ be the JCCP of a random forest $\f^{\tilde\gamma}_\bot$ truncated at $T$.
Then, after applying the operator $\K$, the process obtained has the law of the contour of a forest $\widetilde\f^{\gamma}_\top$, also truncated at $T$.
More precisely,
\[
\K\left( \Co^{(T)}  (\f^{\tilde\gamma}_\bot)\right) \,{\buildrel d \over =}\, \Co^{(T)}  \left(\widetilde\f^{\gamma}_\top\right)
\]
\end{theorem}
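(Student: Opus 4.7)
The plan is to use Lemmas~\ref{lemma:contour-trees} and~\ref{lemma:f-etoile} to realise $X:=\Co^{(T)}(\f^{\tilde\gamma}_\bot)$ as an explicit functional of the Lévy process~$\Y$, analyse the effect of $\K=\rho\circ\chi$ on such a path, and then invoke Proposition~\ref{prop:duality} to recognise the result as $\Co^{(T)}(\widetilde\f^{\gamma}_\top)$. Concretely, $X$ is $\Y$ under~$P_0$, reflected below~$T$ and time-changed by the clock~$\alpha$ of Remark~\ref{rem:reflect} to excise the below-$0$ pieces of successive excursions, killed at~$g_0$; equivalently (via Lemmas~\ref{lemma:contour-trees}(iii) and~\ref{lemma:f-etoile}), $X$ is the juxtaposition of a geometric ($\widetilde\gamma$) number of above-$0$ excursions of~$\Y$ that do not attain~$T$ preceded by up-jumps of law~$\mu_\bot$, followed by one final above-$0$ excursion that does attain~$T$.

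With the notation $\zeta=g_0$, $A=(X_{\overline g_T+t})_{0 \le t < \zeta-\overline g_T}$ and $B=(X_t)_{0 \le t < \overline g_T}$, we have $\chi(X)=[A,B]$. A last-exit decomposition of~$X$ at its last visit to~$T$ identifies the law of~$A$, independently of~$B$, as $P_T(\cdot\mid\tau_0<\tau_T^+)\circ k^{-1}_{\tau_0}$: namely $\Y$ started at~$T$, killed at~$\tau_0$ and conditioned to avoid $(T,+\infty)$. In tree language, $A$ is the final descent of the alive tree from its last $T$-visit to its extinction, while $B$ is the concatenation of all the dead trees followed by the initial segment of the alive tree up to~$\overline g_T$. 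Applying $\rho$ to $[A,B]$ then yields $\K(X)=[\rho^T(B),\rho(A)]$, where $\rho^T(B)_t=T-B_{(\zeta_B-t)-}$ and $\rho(A)_s=T-A_{(\zeta_A-s)-}$, glued at $t=\zeta_B=\overline g_T$ by an up-jump of size $X_0\sim\mu_\bot$.

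To transform these pieces into the $\widetilde\Y$-picture, I apply Proposition~\ref{prop:duality} excursion by excursion, using systematically the identity~(\ref{eq:prob-cond-hit-0}), $P_x(\cdot\mid\tau_0<+\infty)\circ k^{-1}_{\tau_0}=\widetilde P_x\circ k^{-1}_{\tau_0}$. Part~(i) turns $\rho(A)$ into a $\widetilde\Y$-path from~$T$ down to~$0$ conditioned to avoid $(T,+\infty)$, which is exactly the truncated contour of the alive tree of $\widetilde\f^{\gamma}_\top$ read from its first $T$-visit onward. For $\rho^T(B)$ I decompose $B$ into the successive excursions of $\Y$ from~$0$ (each consisting of a below-$0$ pre-jump piece, an up-jump, and an above-$0$ post-jump piece possibly containing $T$-reflections) and apply parts~(ii)--(iii): the reversal of each below-$0$ pre-jump piece becomes an above-$0$ $\widetilde\Y$-piece starting from the undershoot (of law $\mu_\top$) and killed at $0$---precisely the contour of a $\widetilde\Pro_\top$-tree---while the reversal of each above-$0$ post-jump piece becomes a below-$0$ $\widetilde\Y$-piece, excised by the implicit time-change~$\alpha$. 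The geometric ($\widetilde\gamma$) count of dead trees in~$\f^{\tilde\gamma}_\bot$ matches a geometric ($\gamma$) count in~$\widetilde\f^{\gamma}_\top$ thanks to Lemma~\ref{lemma:gamma-tilde-gamma}, which gives $\gamma=\widetilde\Pro_\top(\xi_T\neq 0)$.

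The main obstacle is the excursion-wise bookkeeping in the last step: one must verify that every overshoot $\mu_\bot$ above~$0$ in~$X$ is transformed, under the three-part duality, into an undershoot~$\mu_\top$ at the corresponding up-jump of~$\K(X)$, and that the reflections at~$T$ in~$X$ translate correctly into the truncation at~$T$ of~$\Co^{(T)}(\widetilde\f^{\gamma}_\top)$. These identifications rest on the explicit densities~(\ref{eq:undershoot})--(\ref{eq:overshoot}) together with the Esscher-type intertwining $\widetilde\Pi(\der y)=\e^{-\eta y}\Pi(\der y)$ between the two Lévy measures, which is precisely the algebraic fact that converts the conditioning $(\,\cdot\mid\tau_0<+\infty)$ for~$\Y$ into the unconditioned law of~$\widetilde\Y$ on killed paths.
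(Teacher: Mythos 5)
Your overall strategy---realise the truncated contour as a time-changed Lévy path, cut it at the last visit to $T$, apply $\rho$, and use Proposition~\ref{prop:duality} together with the Esscher identity (\ref{eq:prob-cond-hit-0}) to recognise the reversed pieces as $\widetilde\Y$-paths---is exactly the paper's. But the excursion bookkeeping, which you yourself flag as the main obstacle, is carried out with the roles of the pieces swapped, and this is not cosmetic. Since $\K$ reverses around $\chi(X)_0=T$, the below-$0$ pre-jump pieces of the excursions of $\Y$ from $0$ are mapped to values \emph{above} $T$ and are excised by $\alpha^T$; they contribute nothing to $\K(X)$ (indeed they are already absent from $X=\Co^{(T)}(\f^{\tilde\gamma}_\bot)$ before $\K$ is applied), and they certainly do not become the dead trees of $\widetilde\f^{\gamma}_\top$. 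The correct correspondence is the opposite of the one you describe: the dead trees of the dual forest arise from reversing the $N$ sub-$T$ excursions $\widetilde\epsilon_i$ of the \emph{surviving} tree of $\f^{\tilde\gamma}_\bot$ (with law $P_T(\cdot\,\vert\,\tau_T^+<\tau_0)\circ k_{\tau_T^+}^{-1}$, which reverses into $\widetilde P_\top(\cdot\,\vert\,\tau_0<\tau_T^+)\circ k_{\tau_0}^{-1}$), while the $\widetilde N$ dead trees of $\f^{\tilde\gamma}_\bot$ reverse, after $\alpha^T$ removes their reversed below-$0$ parts, into the $\widetilde N$ reflections at $T$ of the surviving tree of $\widetilde\f^{\gamma}_\top$. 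This swap is precisely what makes the parameters work: $N$, geometric with parameter $\gamma=\widetilde\Pro_\top(\xi_T\neq 0)$, counts the dual dead trees, and $\widetilde N$, geometric with parameter $\widetilde\gamma=\widetilde P_T(\tau_0<\tau_T^+)$, counts the dual tree's sub-$T$ excursions. Your assertion that ``the geometric $(\widetilde\gamma)$ count of dead trees in $\f^{\tilde\gamma}_\bot$ matches a geometric $(\gamma)$ count in $\widetilde\f^{\gamma}_\top$'' cannot be repaired: $\widetilde W(T)=\e^{-\eta T}W(T)$, so $\gamma\neq\widetilde\gamma$ whenever $\eta>0$, and Lemma~\ref{lemma:gamma-tilde-gamma} asserts no such equality. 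As written, your construction assembles a forest with the wrong (geometric $\widetilde\gamma$, not $\gamma$) number of dead trees.

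Two further gaps. First, $\rho(A)$ is not ``the truncated contour of the alive tree of $\widetilde\f^{\gamma}_\top$ read from its first $T$-visit onward''; it is only the final descent from the \emph{last} $T$-visit to extinction, and its reversal-invariance is not an instance of Proposition~\ref{prop:duality}(i) but requires a separate argument (Lemma~\ref{lemma:last-exc}), since the conditioning $\{\tau_0<\tau_T^+\}$ is not preserved termwise by part~(i). Second, you assert that the dual surviving tree starts ``from the undershoot (of law $\mu_\top$)''; in fact the undershoot at $T$ of the excursion conditioned on $\{\tau_T^+<\tau_0\}$ is a size-biased version of $\mu_\top$, and one must check---this is the computation around (\ref{eq:undershoot-cond})--(\ref{eq:undershoot-cond1})---that this bias cancels exactly against the conditioning of the dual tree on survival, so that the mixture is $\widetilde P_\top(\cdot\,\vert\,\tau_T^+<\tau_0)$ and not some other law. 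Without these corrections the identification with $\Co^{(T)}(\widetilde\f^{\gamma}_\top)$ does not go through.
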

\medskip

Now the proof of Theorem~\ref{theo:main} can be achieved as a quite immediate consequence of this second theorem.
\begin{proof}[Proof of Theorem~\ref{theo:main}]
Recall our definition of the local time $\Gamma$ of a process $\Y$ with finite variation and finite lifetime. We only need to notice that, for any càdlàg function $X$ such that $g_0(X)<+\infty$,
\[
\left(\Gamma_{T-t}(X\circ k_{g_0(X)}),0\leq t\leq T \right)= \left(\Gamma_{t}\circ \K(X),0\leq t\leq T\right).
\]
This is true, in particular, when $X$ is the contour of a truncated forest $\Co^{(T)} (\widetilde\f_\top^\gamma)$, which lifetime is precisely $g_0(\Co)$.
Since the local time process of the contour of a tree, $\Gamma\circ \Co$, is the same as its population size process $\Xi$, the first result is established.

The second statement about the subcritical and critical cases is immediate from the first one, and the fact that measures $\mu_\top$ and $\mu_\bot$ are the same, as well as $\Pi$ and $\widetilde \Pi$, when $m\leq 1$.

Finally, the third identity is also a consequence of the first one and Lemma~\ref{lemma:f-etoile}.
\end{proof}

To demonstrate Theorem~\ref{theo:main1} we will consider first two independent sequences, with also independent elements, distributed as excursions of the process $Y$ starting at 0 or $T$ and killed upon hitting 0 or $T$. 
Notice that, $P_0$-a.s., we have $\{\tau_0<\tau_T^+,\tau_0^+<+\infty\} = \{\tau_0<\tau_T^+\}$ and $\{\tau^+_T<\tau_0,\tau_0^+<+\infty\} = \{\tau^+_T<\tau_0\}$, however we choose to use  the left-hand-side events in the definitions below, to emphasize the fact that the process is conditioned on hitting $(0,+\infty)$.
More precisely define the sequence $(\epsilon_i, 1\leq i\leq \widetilde N+1)$ as follows
\begin{itemize}
\item $(\epsilon_i)$ are i.i.d. with law $P_0(\cdot\vert \tau_0<\tau_T^+,\tau_0^+<+\infty)\circ k_{\tau_0}^{-1}$ for $1\leq i\leq \widetilde N$, 
\item $\epsilon_{\widetilde N+1}$ has law $P_0 (\cdot\vert \tau^+_T<\tau_0,\tau_0^+<+\infty)\circ k_{\tau_T^+}^{-1}$
\item $\widetilde N$ is an independent geometric random variable with probability of success $\widetilde\gamma= P_0 ( \tau^+_T<\tau_0,\vert\tau_0^+<+\infty)$.
\end{itemize}

Also define the sequence $(\widetilde{\epsilon}_i, 1\leq i\leq N+1)$ as
\begin{itemize}
\item $\widetilde\epsilon_i$ are i.i.d. with the law $P_T (\cdot\vert \tau^+_T<\tau_0)\circ k_{\tau_T^+}^{-1}$ for $1\leq i\leq N$
\item $\widetilde\epsilon_{N +1}$ has law $P_T(\cdot\vert \tau_0<\tau^+_T)\circ k_{\tau_0}^{-1}$
\item $N$ is an independent geometric random variable with probability of success $\gamma = P_T(\tau_0<\tau_T^+)$.
\end{itemize}

We denote by $Z$ the process obtained by the concatenation of these two sequences of excursions in the same order they were defined, that is $Z\coloneqq [\epsilon,\widetilde\epsilon]$ (see Fig.~\ref{fig:Z}).
We will prove first that, after a time change erasing the negative values of these excursions and closing up the gaps, the process thus obtained has the same law as the contour of the forest $\f^{\tilde\gamma}_\bot$ truncated at $T$.

\begin{figure}
\centering
\includegraphics[scale=0.47]{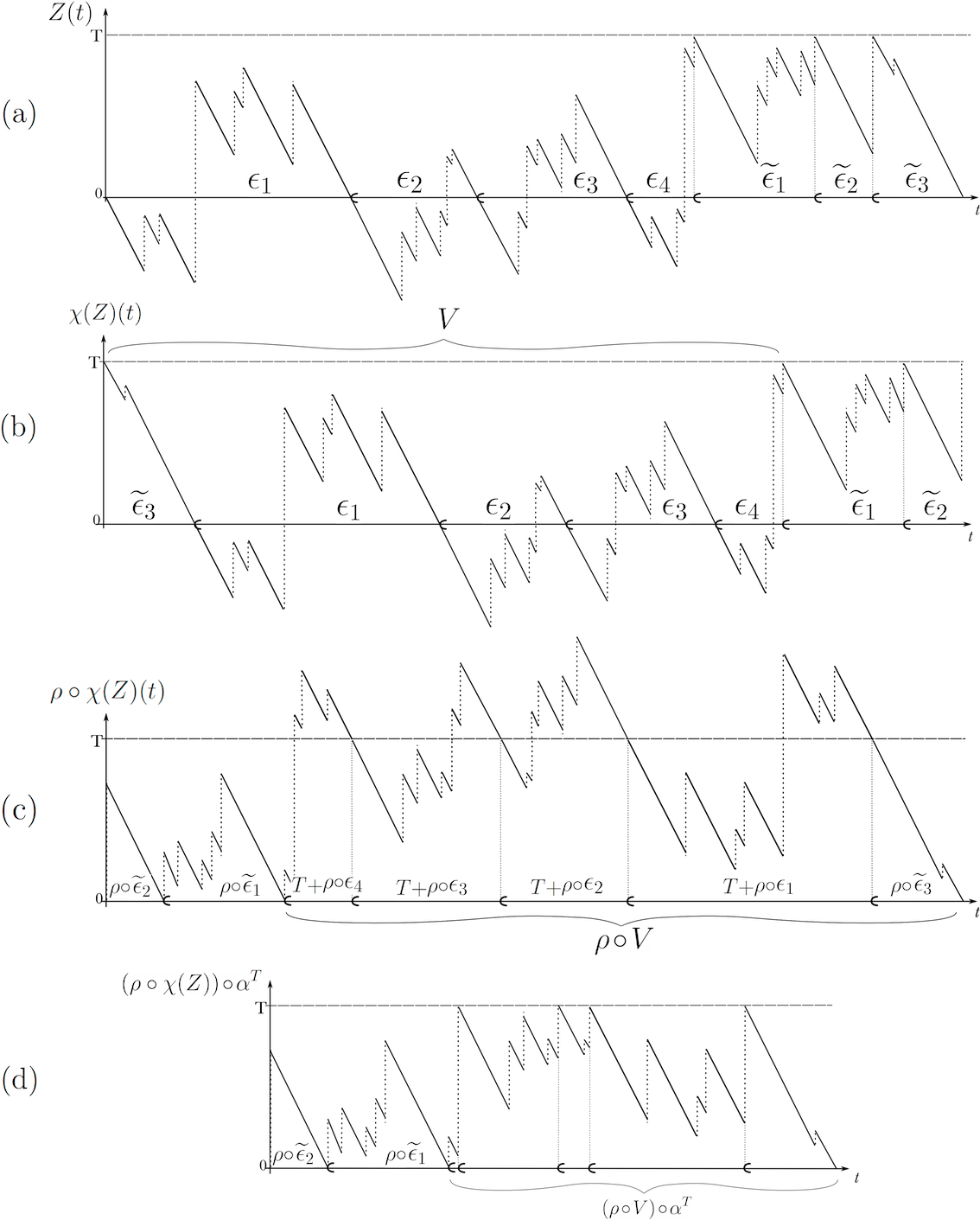}
\captionsetup{width=14cm}
\caption{(a) An example of the Lévy process $Z$ with $\widetilde N = 3$ excursions before hitting $T$, and $N=2$ excursions before hitting 0 again, and some path transformations of $Z$: (b) $\chi(Z)$ places the last excursion $\widetilde \epsilon_3$ at the origin and shifts to the right the rest of the path; (c) $\rho\circ \chi (Z)$ is the space-time-reversal of the path at (b); and (d) $\left(\rho\circ\chi(Z)\right)\circ\alpha^T $ erases the sub-paths of (c) taking values greater than $T$ and closes up the gaps, hence the shorter length of the path.}
\label{fig:Z}
\end{figure}

\begin{claim}\label{claim:zeta}
We have the following identity in law:
\[
 Z\circ\alpha \,{\buildrel d \over =}\,  \Co^{(T)} (\f^{\tilde\gamma}_\bot)
\]
\end{claim}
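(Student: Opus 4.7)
The plan is to show that both $Z\circ\alpha$ and $\Co^{(T)}(\f^{\widetilde\gamma}_\bot)$ admit, after decomposition, the same concatenation of i.i.d.\ single-tree contours. By construction, $\Co^{(T)}(\f^{\widetilde\gamma}_\bot)$ is the concatenation of $N_{\widetilde\gamma}+1$ independent truncated-tree contours: the first $N_{\widetilde\gamma}$ have law $\Co(\ctree_\bot)$ under $\Pro_\bot(\cdot\vert\xi_T=0)$, the last has law $\Co(\ctree_\bot^{(T)})$ under $\Pro_\bot(\cdot\vert\xi_T\neq 0)$, and $N_{\widetilde\gamma}$ is geometric with parameter $\widetilde\gamma = \Pro_\bot(\xi_T\neq 0)$ by Lemma~\ref{lemma:gamma-tilde-gamma}. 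I would match the first $\widetilde N$ positive parts $(\epsilon_i\circ\alpha)_{i\leq\widetilde N}$ to the extinct-tree contours, and the concatenation of $\epsilon_{\widetilde N+1}\circ\alpha$ with $[\widetilde\epsilon_1,\ldots,\widetilde\epsilon_{N+1}]$ to the surviving-tree contour.

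For the extinct blocks I would invoke Lemma~\ref{lemma:contour-trees}(ii): the positive segment $(Y_{\tau_0^++t},\,0\leq t<\tau_0-\tau_0^+)$ of $Y$ under $P_0(\cdot\vert\tau_0<\tau_T^+,\tau_0^+<+\infty)$ has exactly the law of $\Co(\ctree_\bot)$ under $\Pro_\bot(\cdot\vert\xi_T=0)$. This segment is precisely $\epsilon_i\circ\alpha$ by construction of the time-change $\alpha$, which erases the initial sub-path from $0$ to $\tau_0^+$ where $Y\leq 0$. Hence $(\epsilon_i\circ\alpha)_{i\leq\widetilde N}$ is an i.i.d.\ sequence of extinct-tree contours whose length $\widetilde N$ has the correct geometric distribution.

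For the surviving block I would rely on Lemma~\ref{lemma:contour-trees}(iii), which identifies $\Co(\ctree_\bot^{(T)})$ under $\Pro_\bot(\cdot\vert\xi_T\neq 0)$ with the process $(Y_{\tau_0^++t})$ under $P_0(\cdot\vert\tau_0^+<+\infty)$, reflected at $T$ and killed at 0, further conditioned to reach $T$. I would split this contour at its first visit to $T$. By the strong Markov property the pre-$T$ segment is distributed as $(Y_{\tau_0^++t},\,0\leq t<\tau_T^+-\tau_0^+)$ under $P_0(\cdot\vert\tau_T^+<\tau_0,\tau_0^+<+\infty)$, which is exactly $\epsilon_{\widetilde N+1}\circ\alpha$. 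The post-$T$ segment is the reflected process started from $T$ and killed at 0; its canonical excursion decomposition (as in (\ref{eq:reflect})) combined with Lemma~\ref{lemma:geom} produces a geometric number (parameter $\gamma=P_T(\tau_0<\tau_T^+)=1/W(T)$) of i.i.d.\ excursions with law $P_T(\cdot\vert\tau_T^+<\tau_0)\circ k^{-1}_{\tau_T^+}$, terminated by one of law $P_T(\cdot\vert\tau_0<\tau_T^+)\circ k^{-1}_{\tau_0}$ — precisely the sequence $[\widetilde\epsilon_1,\ldots,\widetilde\epsilon_N,\widetilde\epsilon_{N+1}]$.

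The main technical point is verifying that the concatenation faithfully reproduces the reflection at $T$: each censored piece ($\epsilon_{\widetilde N+1}$, or $\widetilde\epsilon_i$ for $i\leq N$) ends at $Y_{\tau_T^+-}\in(0,T]$ just before its killed jump above $T$, while the following piece starts at $T$; the concatenation therefore produces a jump from $Y_{\tau_T^+-}$ to $T$, which is exactly the reflected image of the original jump from $Y_{\tau_T^+-}$ to $Y_{\tau_T^+}>T$. A case-check is needed when $Y_{\tau_0^+}>T$, in which case $\epsilon_{\widetilde N+1}\circ\alpha$ has empty lifetime and the matching contour on the right-hand side directly starts at $T$ by reflection of $\mu_\bot$. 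Independence of all blocks follows from the strong Markov property of $Y$ and the independence built into the definitions of $Z$ and $\f^{\widetilde\gamma}_\bot$.
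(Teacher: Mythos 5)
Your proof is correct and follows essentially the same route as the paper: both decompose $\Co^{(T)}(\f^{\tilde\gamma}_\bot)$ into a geometric number of independent tree contours, identify the extinct blocks with $\epsilon_i\circ\alpha$ via Lemma~\ref{lemma:contour-trees}(ii), and identify the surviving block with $[\epsilon_{\widetilde N+1}\circ\alpha,\widetilde\epsilon]$. The only difference is that where the paper invokes Theorem 4.3 of \cite{Lam10} for the surviving tree, you unpack that citation into Lemma~\ref{lemma:contour-trees}(iii) plus the excursion decomposition of the reflected process, and you additionally spell out the reflection-at-$T$ bookkeeping and the degenerate case $Y_{\tau_0^+}>T$ that the paper leaves implicit.
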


\begin{proof}
Since a forest $\f^{\tilde\gamma}_\bot$ is a finite sequence of independent trees and, when truncated, its contour process, say $\Co^{(T)} (\f^{\tilde\gamma}_\bot)\coloneqq \left(\Co_t, 0 \leq t\leq \overline\ell\right)$, is defined as the concatenation of the contour of the trees of this sequence, its law will be characterized by the law of the sequence of the killed paths $e_i\coloneqq\left(\Co_{t_i+t},0\leq t<t_{i+1}-t_i\right)$, where $t_0=0$, $t_i = \sum_{j=1}^i \ell(\ctree_{\bot,i})$ for $1\leq i\leq N_{\tilde\gamma}$,  $t_{N_{\tilde\gamma}+1}= t_{N_{\tilde\gamma}} + \ell(\ctree^{(T)}_{\bot,N_{\tilde\gamma}+1}) = \ell(\f^{\tilde\gamma}_\bot)$. Here $N_{\tilde\gamma}+1$ is the number of trees in $\f^{\tilde\gamma}_\bot$.
These killed paths are then the JCCP's of each of the trees in the forest, that is $e_i = \Co(\ctree_{\bot,i}^{(T)})$ for every $1\leq i\leq N_{\tilde\gamma}+1$, which are by definition independent, and their number $N_{\tilde\gamma}+1$ is geometric with parameter $\widetilde\gamma$. 
The first $N_{\tilde\gamma}$ are identically distributed and conditioned on extinction before $T$, and the last one conditioned on surviving up until time $T$. 

On the other hand, define for the sequence of excursions $\epsilon$, the lifetime of its terms, $\zeta_i = \zeta(\epsilon_i)$ and the first time they hit $[0,+\infty)$, $\zeta_i^+=\zeta^+(\epsilon_i)$. These excursions start at 0  and always visit first $(-\infty,0)$. Under $\{\tau_0^+<+\infty\}$, 0 is recurrent for the reflected process, so we have $0<\zeta_i^+<\zeta_i<+\infty$, $P_0$-a.s. for every $i\geq 1$.
If now we apply to each of these excursions the time change $\alpha$, removing the non positive values and closing up the gaps, we obtain for $1\leq i\leq \widetilde N+1$, 
\[
\epsilon_i\circ \alpha = \left(\epsilon_i(\zeta_i^++t),0\leq t< \zeta_i-\zeta_i^+ \right) 
\]
so, for $1\leq i\leq \widetilde N$, $\epsilon_i\circ \alpha$ has  the law of $(\Y_{\tau_0^++t}, 0\leq t\leq \tau_0-\tau_0^+)$ under $P_0(\cdot\vert \tau_0<\tau_T^+,\tau_0^+<+\infty)$.
Thanks to Lemma~\ref{lemma:contour-trees} (ii), this is the same as the law of the contour of each of the first $N_{\tilde\gamma}$ trees on the forest $\f^{\tilde\gamma}_\bot$.

The last excursion, $\epsilon_{\widetilde N+1}\circ \alpha$ has the law of $(\Y_{\tau_0^++t}, 0\leq t\leq \tau_T^+-\tau_0^+)$ under $P_0(\cdot\vert \tau_T^+<\tau_0,\tau_0^+<+\infty)$, that is, an excursion of $Y$ starting from an initial value distributed according to $\mu_\bot$, conditioned on hitting $T$ before 0. 
 
If we look now at the second sequence $\widetilde{\epsilon}$, we notice that, since $\gamma = P_T(\tau_0<\tau_T^+)$, it is distributed as a sequence of i.i.d. excursions with law $P_T\circ k_{\tau_0\wedge\tau_T^+} (\cdot)$, stopped at the first one hitting 0 before $(T,+\infty)$.
Thus, Theorem 4.3 in \cite{Lam10} 
guarantees that the concatenation $[\epsilon_{\widetilde N+1}\circ \alpha,\widetilde{\epsilon}]$ has the law of the contour of the last tree in the forest $\f^{\tilde\gamma}_\bot$ truncated at $T$, say $\ctree^{(T)}_{\bot,N_{\tilde\gamma}+1}$.

Finally, since $\widetilde N$ has the same distribution as $N_{\tilde\gamma}$, we have,
\[
[e_i, 1\leq i\leq N_{\tilde\gamma}+1] \,{\buildrel d \over =}\, [\epsilon_i\circ \alpha, 1\leq i\leq \widetilde N+1,\widetilde{\epsilon}_i, 1\leq i\leq N+1]
\]
Now the right-hand-side equals $[\epsilon,\widetilde\epsilon]\circ \alpha$, because the time change $\alpha$ is the inverse of an additive functional, so it commutes with the concatenation, and the elements of the sequence $\widetilde\epsilon$ do not take negative values, so the time change $\alpha$ has no effect on them.
This ends the proof of the claim.
\end{proof}

Now we will look at the process $Z$ after relocating the last excursion of the second sequence, $\widetilde\epsilon_{N +1}$ to the beginning and shifting the rest of the path to the right. More precisely, consider the process 
\[V=[\widetilde\epsilon_{N +1}, \epsilon_1,\ldots, \epsilon_{\widetilde N +1}],
\] 
and consider also the space-time-reversed process $\rho\circ V$ (see Fig.~\ref{fig:Z}). It is not hard to see that
\[
\rho\circ V = [T+\rho\circ \epsilon_{\widetilde N +1},T+\rho\circ \epsilon_{\widetilde N},\ldots,T+\rho\circ\epsilon_1,\rho\circ \widetilde\epsilon_{N +1} ],
\]
since $V(0)=\epsilon_{\widetilde N +1}(0) = T$ and all the other excursions in $V$ take the value 0 at 0. 
Then we have the following result on the law of this process, reflected at $T$.
\medskip

\begin{claim}\label{claim:rho-V-alpha}
We have the following identity in law:
\[
(\rho\circ V)\circ\alpha^T \,{\buildrel d \over =}\, \Co^{(T)}\left(\widetilde \ctree_{\top}^\prime\right)
\] 
where $\widetilde \ctree_{\top}^\prime$ is a splitting tree conditioned on surviving up util time $T$.
\end{claim}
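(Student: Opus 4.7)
The plan is to treat each block of the explicit decomposition of $\rho\circ V$ via the time-reversal duality for Lévy processes (Proposition~\ref{prop:duality}), convert the resulting paths to the dual process $\widetilde Y$ using the $h$-transform identity~\eqref{eq:prob-cond-hit-0}, and then show that after the time change $\alpha^T$ the concatenation has the law of the contour of a single truncated tree.

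Concretely, for $1\le i\le\widetilde N$ I would decompose $\epsilon_i$ at its first passage time $\tau_0^+$ into $(0,+\infty)$. By Proposition~\ref{prop:duality}(iii) together with~\eqref{eq:prob-cond-hit-0}, the block $T+\rho\circ\epsilon_i$ is an excursion of $\widetilde Y$ starting at $T$: first the reversal of the above-$0$ sub-excursion of $Y$ (a below-$T$ piece for $\widetilde Y$, returning to $T$ without hitting $0$), then the reversal of the below-$0$ sub-excursion of $Y$ (an above-$T$ piece), returning to $T$. The block $T+\rho\circ\epsilon_{\widetilde N+1}$ starts at the random value $T-Y_{\tau_T^+-}$, whose joint law with the overshoot is given by~\eqref{eq:under-overshoot-0}; the reversal turns the remainder of this block into a $\widetilde Y$-path up to the first hit of $T$. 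Finally, Proposition~\ref{prop:duality}(i) identifies $\rho\circ\widetilde\epsilon_{N+1}$ with a $\widetilde Y$-path from $T$ conditional on $\{\tau_0<\tau_T^+\}$ and killed at $\tau_0$.

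Applying $\alpha^T$ then erases precisely the above-$T$ sub-paths. Each block $T+\rho\circ\epsilon_i$ with $1\le i\le\widetilde N$ contributes one below-$T$ excursion of $\widetilde Y$ from $T$ to $T$ (killed at its first jump above $T$); block $T+\rho\circ\epsilon_{\widetilde N+1}$ contributes an initial below-$T$ segment from some $u\in[0,T]$ up to the first hit of $T$; the last block is preserved. A direct calculation combining~\eqref{eq:under-overshoot-0},~\eqref{eq:undershoot}, and~\eqref{eq:convol-W-tilde} shows that the law of $u$ is the mixture of an atom at $T$ and a continuous part on $(0,T)$ matching the law of $\zeta(\varnothing)\wedge T$ when $\zeta(\varnothing)\sim\widetilde\mu_\top$, biased by the event of tree survival. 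The concatenation is therefore a $\widetilde Y$-path reflected below $T$, started from $u$ and killed at $0$, conditional on reaching $T$; by the $(\widetilde\Pi,\top)$-analog of Lemma~\ref{lemma:contour-trees}(iii), this is the law of $\Co^{(T)}(\widetilde\ctree_\top^\prime)$. The hardest step will be block $\widetilde N+1$: one must distinguish whether the jump of $Y$ above $T$ originates from below $0$ (producing the atom at $T$ after $\alpha^T$) or from $[0,T]$ (producing the continuous part of the law of $u$), and verify that the two contributions combine into exactly the biased $\widetilde\mu_\top$-law of the ancestor for a $\top$-tree conditioned on survival.
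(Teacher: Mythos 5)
Your overall architecture matches the paper's: reverse the blocks of $V$ via duality, transfer to $\widetilde P$ by the $h$-transform, let $\alpha^T$ erase the above-$T$ sub-paths, and identify the resulting concatenation as a reflected $\widetilde Y$-path started from a $\mu_\top$-biased initial value. Your two departures are workable: decomposing each $\epsilon_i$ at $\tau_0^+$ and applying Proposition~\ref{prop:duality}(iii) block by block (the paper instead reverses each $\epsilon_i$ whole, obtaining the law $P_T(\cdot\mid\tau_T^+<\tau_0,\tau_T<+\infty)\circ k_{\tau_T}^{-1}$ and then changing measure with Lemma~\ref{lemma:funct-law}), and computing the law of the initial value $u$ from the overshoot/undershoot density \eqref{eq:under-overshoot-0} (the paper identifies it by duality as $P_0(-\Y_{\tau_0^+-}\in\cdot\mid\tau_{-T}<\tau_0^+<+\infty)$ and then does a change-of-measure computation). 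Your observation that the jump of $\epsilon_{\widetilde N+1}$ across $T$ may originate from below $0$, producing an atom at $T$ for the starting level after $\alpha^T$, is correct and corresponds to the mass of $\mu_\top$ on $(T,+\infty)$ being absorbed into a reflected start at $T$ --- a point the paper passes over quickly.

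Three items need repair. First, the invariance of $P_T(\cdot\mid\tau_0<\tau_T^+)\circ k_{\tau_0}^{-1}$ under $\rho$ does \emph{not} follow from Proposition~\ref{prop:duality}(i), which concerns excursions from $0$ under $P_0(\cdot\mid\tau_0<+\infty)$; the statement you need is exactly Lemma~\ref{lemma:last-exc}, whose proof requires decomposing such an excursion at its last passage at level $T$ before (i) can be applied. Second, to pass from ``$\widetilde N$ excursions of the reflected $\widetilde Y$ from $T$ conditioned to avoid $0$, followed by one conditioned to hit $0$'' to ``the reflected process killed at $\tau_0$'', you must check that the parameter of $\widetilde N$, namely $\widetilde\gamma=1/\widetilde W(T)$, coincides with $\widetilde P_T(\tau_0<\tau_T^+)$ (via \eqref{eq:exit-two-side}) and then invoke Lemma~\ref{lemma:geom}; this reassembly step is absent from your plan and without it the concatenation is not identified as a single reflected path. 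Third, beware the undefined symbol $\widetilde\mu_\top$: the ancestor law that must emerge is $\mu_\top(\der u)\propto\e^{-\eta u}\overline{\Pi}(u)\der u$, the undershoot law of the \emph{original} process $Y$, not the analogous measure built from $\widetilde\Pi$ (whose density would be proportional to $\overline{\widetilde\Pi}(u)$, a genuinely different function). With these three items supplied, your calculation should close.
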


For the proof of Claim~\ref{claim:rho-V-alpha} we will need the following two results that are proved in the Appendix.

\begin{lemma}\label{lemma:last-exc}
The probability measure $P_T\left(\cdot\middle\vert \tau_0<\tau_T^+\right)\circ k_{\tau_0}^{-1}$ is invariant under space-time-reversal.
\end{lemma}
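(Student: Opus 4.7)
The plan is to give an explicit description of the target law via the Poisson point process of jumps of $Y$, and then to show by a direct symmetry argument that the resulting density is invariant under $\rho$.

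First, I would parametrize the paths in the support of $P_T(\cdot\vert\tau_0<\tau_T^+)\circ k^{-1}_{\tau_0}$. Since $\Pi$ has finite mass $b$ and $Y$ has drift $-1$ with only positive jumps, any such path is determined by a nonnegative integer $n$ together with ordered data $(t_1,\ldots,t_n;j_1,\ldots,j_n)$ satisfying $0<t_1<\cdots<t_n<\tau_0$ and $j_i>0$; the endpoint relation $Y_{\tau_0}=0$ forces $\tau_0=T+\sum_i j_i$. The Poisson-point-process structure under $P_T$ (combined with the compensator for no other jump in $[0,\tau_0]$) yields the joint density
\[
\e^{-b\tau_0}\prod_{i=1}^n \der t_i\,\Pi(\der j_i)
\]
on the ordered simplex, restricted to the event that $Y$ stays in $[0,T]$ on $[0,\tau_0]$, and divided by the normalizing constant $\gamma=P_T(\tau_0<\tau_T^+)=1/W(T)$.

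Next, a direct pathwise calculation from the definition $\rho(Y)_t=T-Y_{(\tau_0-t)-}$ would show that $\rho(Y)$ is again a path of the same type, with the same $\tau_0$ and drift $-1$, whose jumps occur at $\tilde t_i:=\tau_0-t_{n+1-i}$ with sizes $\tilde j_i:=j_{n+1-i}$: the jump atoms are reflected across $\tau_0/2$ and the size sequence is reversed accordingly. All ingredients of the density are invariant under this involution: the Lebesgue measure on the ordered simplex is invariant under time-reflection, the product $\prod_i\Pi(\der j_i)$ is invariant under permutation of coordinates, $\tau_0$ depends only on the multiset of jump sizes, and the constraint ``$Y$ stays in $[0,T]$'' transforms into the same constraint for $\rho(Y)$ (the two inequalities $Y\ge 0$ and $Y\le T$ swap roles under $y\leftrightarrow T-y$, but their conjunction is preserved). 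This gives the desired invariance in law.

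The main subtlety will be the pathwise check that $\rho(Y)$ really has exactly $n$ jumps, located at the $\tilde t_i$ and of sizes $\tilde j_i$, and nothing else; this requires a careful manipulation of left limits in the definition of $\rho$, verifying in particular that each original jump $(t_i,j_i)$ contributes a single jump of identical size at the reflected time and that continuous drift portions of $Y$ translate into continuous drift portions of $\rho(Y)$. Once this pathwise correspondence is in hand, the invariance reduces to elementary symmetries of the reference Poisson measure on time $\otimes$ size.
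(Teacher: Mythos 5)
Your argument is correct, but it takes a genuinely different route from the paper. The paper deduces the lemma from Bertoin's duality for the excursion away from $0$ (Proposition~\ref{prop:duality}(i)): it identifies $P_T\left(\cdot\middle\vert \tau_0<\tau_T^+\right)\circ k_{\tau_0}^{-1}$ with the law of the excursion under $P_0(\cdot\vert\tau_T^+<\tau_0<+\infty)$ shifted to its \emph{last} passage at $T$, applies the space-time-reversal invariance of the whole excursion to a functional built from that last-passage time, and closes the loop with the strong Markov property. You instead exploit the finite-activity structure ($\Pi$ of mass $b$, drift $-1$), write the killed path as a finite marked point configuration with explicit density $\e^{-b\tau_0}\prod_i\der t_i\,\Pi(\der j_i)$ on the ordered simplex, and observe that the reversal acts as the involution $(t_i,j_i)\mapsto(\tau_0-t_{n+1-i},j_{n+1-i})$, under which every ingredient of the density and the constraint set are manifestly invariant. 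Your route is more elementary and self-contained (no appeal to \cite{Ber92}) and makes the symmetry transparent, at the cost of being tied to finite $\Pi$ (harmless here, since Section~\ref{sec:results} assumes mass $b$) and of requiring the pathwise bookkeeping you flag: besides checking $\Delta\rho(Y)_t=\Delta Y_{\tau_0-t}$, you should note that a.s.\ no jump lands exactly on the level $T$ and no jump times accumulate, so that $Y_{u-}<T$ for all $u\in(0,\tau_0]$ and the constraint $0<Y\leq T$ on $[0,\tau_0)$ is carried to the identical constraint for $\rho(Y)$ despite the interchange of values and left limits; and you should justify the density formula on the random interval $[0,\tau_0]$ (where $\tau_0=T+\sum_i j_i$ is itself determined by the configuration), e.g.\ by the standard stopping-time argument for compound Poisson processes with drift. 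With those details supplied, the proof is complete.
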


\begin{lemma}\label{lemma:funct-law}
For any $a>0$, $x\in[0,a]$ and $\Lambda \in \mathscr{F}_{\tau_a}$ the following identity holds
\[
\widetilde P_x\left(\Lambda,\tau_a^+<\tau_0\right) =
P_x\left(\Lambda, \tau_a^+<\tau_0,\tau_a<+\infty\right) \e^{-\eta(a-x)}
\]
In particular, $\widetilde P_x\left(\tau_a^+<\tau_0\right) =P_x\left( \tau_a^+<\tau_0,\tau_a<+\infty\right) \e^{-\eta(a-x)}$, hence
\[
\widetilde P_x\left(\Lambda\vert \tau_a^+<\tau_0\right) =
P_x\left(\Lambda\vert \tau_a^+<\tau_0,\tau_a<+\infty\right) 
\]
\end{lemma}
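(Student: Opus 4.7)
The plan is to derive both identities from the exponential change of measure that defines $\widetilde P_x$, applied at the stopping time $\tau_a$. Set $M_t \coloneqq \e^{-\eta(\Y_t - x)}$, which is a nonnegative $P_x$-martingale of expectation $1$, and recall that $\der \widetilde P_x/\der P_x$ restricted to $\mathscr{F}_t$ equals $M_t$ for every $t\geq 0$. A standard truncation argument---applying the martingale property to the bounded stopping time $\tau_a\wedge t$ and letting $t\to+\infty$---yields the ``Girsanov at a stopping time'' identity
\[
\widetilde P_x\bigl(A \cap \{\tau_a<+\infty\}\bigr) = E_x\bigl[\1_A\, \1_{\{\tau_a<+\infty\}}\, M_{\tau_a}\bigr], \qquad A\in\mathscr{F}_{\tau_a}.
\]

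The key simplification is that $\Y$ has no negative jumps and starts at $x\leq a$, so any visit to the level $a$ in finite time must occur through a continuous descent from above; hence $\Y_{\tau_a}=a$ on $\{\tau_a<+\infty\}$, and therefore $M_{\tau_a}=\e^{-\eta(a-x)}$ on this event. Substituting this into the previous display yields, for every $A\in\mathscr{F}_{\tau_a}$,
\[
\widetilde P_x\bigl(A\cap\{\tau_a<+\infty\}\bigr) = \e^{-\eta(a-x)}\, P_x\bigl(A\cap\{\tau_a<+\infty\}\bigr).
\]
Applying this to $A=\Lambda\cap\{\tau_a^+<\tau_0\}$, which lies in $\mathscr{F}_{\tau_a^+}\subset \mathscr{F}_{\tau_a}$, already produces the right-hand side of the claimed identity.

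It remains to drop the indicator of $\{\tau_a<+\infty\}$ on the left-hand side, i.e.\ to show that $\widetilde P_x(\tau_a^+<\tau_0,\,\tau_a=+\infty)=0$. Because $\widetilde\psi'(0+)=\psi'(\eta)\geq 0$, the process $\Y$ under $\widetilde P$ never drifts to $+\infty$, and combined with the absence of negative jumps this forces the process, after entering $(a,+\infty)$ at $\tau_a^+$, to descend continuously through $a$ in finite time almost surely. This closes the main identity; the ``In particular'' statement is the case $\Lambda=\Omega$, and the conditional version follows by division. The main obstacle is precisely this last almost-sure argument, which requires a case split according to the sign of $\widetilde\psi'(0+)$: in the supercritical regime $\psi'(\eta)>0$ strictly and $\widetilde\Y$ genuinely drifts to $-\infty$, while in the critical/subcritical regime $\eta=0$, $\widetilde P=P$, and $\Y$ either oscillates or drifts to $-\infty$, which again ensures every lower level is reached almost surely.
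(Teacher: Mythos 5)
Your proof is correct and follows essentially the same route as the paper's: an exponential change of measure evaluated at the bounded stopping time $t\wedge\tau_a$, the limit $t\to+\infty$, and the observation that $\Y_{\tau_a}=a$ on $\{\tau_a<+\infty\}$ because there are no negative jumps. The only cosmetic difference is that you dispose of the event $\{\tau_a=+\infty\}$ on the $\widetilde P_x$ side by an almost-sure return argument, whereas the paper absorbs it on the $P_x$ side via dominated convergence using $\e^{-\eta\Y_t}\to 0$ when $\eta>0$; the two are equivalent.
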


\begin{proof}[Proof of Claim~\ref{claim:rho-V-alpha}]
We can deduce from Proposition~\ref{prop:duality} the following observations about the laws of the space-time-reversed excursions:
\begin{enumerate}
\item[(1)] conditional on $\epsilon_{\widetilde N +1}(\zeta-)=T-u$, the reversed excursion $T+\rho\circ \epsilon_{\widetilde N +1}$ has law $P_u(\cdot\vert\tau_T^+<\tau_0,\tau_T<+\infty)\circ k_{\tau_T}^{-1} $
\item[(2)] for $1\leq i\leq \widetilde N$, the excursions $T+\rho\circ\epsilon_i$ have law $P_T(\cdot\vert \tau_T^+<\tau_0,\tau_T<+\infty)\circ k_{\tau_T}^{-1}$
\end{enumerate}
and thanks to Lemma~\ref{lemma:last-exc}, we also have
\begin{enumerate}
\item[(3)] $\rho\circ \widetilde\epsilon_{ N +1}\,{\buildrel d \over =}\,\widetilde\epsilon_{ N +1}$, with common law $P_T(\cdot\vert \tau_0<\tau_T^+)\circ k_{\tau_0}^{-1}$.
\end{enumerate}

Now we would like to express the laws of the excursions in (1), (2) and (3) in terms of the probability measure $\widetilde P$, the probability of $\Y$ conditioned on not drifting to $+\infty$ (see preliminaries). For (3) we easily have, from (\ref{eq:prob-cond-hit-0}), that
\[
P_T(\cdot\vert \tau_0<\tau_T^+)\circ k_{\tau_0}^{-1} = P_T(\cdot\vert \tau_0<\tau_T^+,\tau_0<+\infty)\circ k_{\tau_0}^{-1} = \widetilde P_T (\cdot\vert \tau_0<\tau_T^+)\circ k_{\tau_0}^{-1}
\]

The result in Lemma~\ref{lemma:funct-law} entails in particular that excursions in (1) and (2), killed at the time they hit $(T,+\infty)$, have distribution $\widetilde P_u (\cdot\vert\tau_T^+<\tau_0)\circ k_{\tau_T^+}^{-1}$ and $\widetilde P_T(\cdot\vert \tau_T^+<\tau_0)\circ k_{\tau_T^+}^{-1}$ respectively. Besides, notice that to kill these excursions at $\tau_T^+$ is the same as applying the time change $\alpha^T$, i.e. removing the part of the path taking values in $(T,+\infty)$.

Then, conditional on $V(\zeta-)=T-u$, the reversed process after the time change $\alpha^T$, that is $(\rho\circ V)\circ\alpha^T$ consists in a sequence of independent excursions distributed as the Lévy process $\widetilde\Y$ killed at $\tau_0\wedge\tau_T^+$, all starting at $T$, but the first one, starting at $u$. There are $\widetilde N$ excursions from $T$, conditioned on hitting $T$ before 0 and a last excursion from $T$ conditioned on hitting 0 before returning to $T$, and killed upon hitting 0.
Observe that $\widetilde N$ has geometric distribution with parameter $\widetilde\gamma$, which is exactly the probability that an excursion of $\widetilde\Y$, starting at $T$, exits the interval $(0,T)$ from the bottom, that is $\widetilde P_T(\tau_0<\tau_T^+)$ (equation~\ref{eq:exit-two-side}). This implies that excursions in (2) and (3) after the time change $\alpha^T$, form a sequence of i.i.d. excursions of $\widetilde \Y$ starting at $T$, killed at $\tau_0\wedge\tau_T^+$, ending at the first one that hits 0 before $[T,+\infty)$ (Lemma~\ref{lemma:geom}).

The fact that the time change $\alpha^T$ commutes with the concatenation, allows to conclude that $(\rho\circ V)\circ\alpha^T$, conditional on $V(\zeta-)=T-u$, has the law of the process $\widetilde\Y$ starting at $u$, conditioned on hitting $T$ before 0, reflected below $T$ and killed upon hitting 0. 

From the definition of the sequence $\epsilon$ we deduce that $V(\zeta-) = \epsilon_{\widetilde N +1}(\zeta-)$ has the law of the undershoot of $Y$ at $T$ of an excursion starting at $T$ and conditioned on hitting $0$ before $(T,+\infty)$. As usual, the strong Markov property and the stationary increments of the Lévy process entail that this excursion is invariant under translation of the space, hence this undershoot has the distribution 
$P_0\left(-\Y_{\tau_0^+-}\in\cdot\middle\vert\tau_{-T}<\tau_0^+<+\infty \right)$. This implies that the law of $(\rho\circ V)\circ\alpha^T$ is
\begin{equation}\label{eq:undershoot-cond}
\int\limits_{0}^{T} P_0\left(-\Y_{\tau_0^+-}\in\der u\middle\vert\tau_{-T}<\tau_0^+<+\infty \right) \widetilde P_u\left(\Y^{(T)}\in\cdot\middle\vert\tau_{T}^+<\tau_0 \right)\circ k_{\tau_0}^{-1},
\end{equation}
and we will show this is the same as
\begin{equation}\label{eq:undershoot-cond1}
\widetilde P_\top \left( \cdot\middle\vert\tau_T^+<\tau_0\right)\circ k_{\tau_0}^{-1}
= \frac{\int_0^T\mu_\top(\der u) \widetilde P_u\left(\Y^{(T)}\in\cdot,\tau_{T}^+<\tau_0 \right)\circ k_{\tau_0}^{-1}}{\int_0^T\mu_\top(\der u) \widetilde P_u\left(\tau_{T}^+<\tau_0 \right)}.
\end{equation}
The strong Markov property and Proposition~\ref{prop:duality} imply that
\begin{eqnarray}
P_0\left(-\Y_{\tau_0^+-}\in\der u\middle\vert\tau_{-T}<\tau_0^+<+\infty \right)  =
\mu_\top(\der u)
\dfrac{P_u\left(\tau_{T}^+<\tau_0\middle\vert\tau_0<+\infty\right)}{P_0\left(\tau_{-T}<\tau_0^+\middle\vert\tau_0^+<+\infty\right)}. \nonumber
\end{eqnarray}
Then by using this identity and (\ref{eq:prob-cond-hit-0}), we have that (\ref{eq:undershoot-cond}) equals
\begin{eqnarray}
&&\int\limits_{0}^{T} \mu_\top(\der u)
\dfrac{P_u\left(\tau_{T}^+<\tau_0\middle\vert\tau_0<+\infty\right)}{P_0\left(\tau_{-T}<\tau_0^+\middle\vert\tau_0^+<+\infty\right)}
\dfrac{\widetilde P_0\left(\Y^{(T)}\in\cdot,\tau_{T}^+<\tau_0 \right)\circ k_{\tau_0}^{-1}}{\widetilde P_u(\tau_T^+<\tau_0)} \nonumber \\
&&=\dfrac{\int\limits_{0}^{T} \mu_\top(\der u) \widetilde P_0\left(\Y^{(T)}\in\cdot,\tau_{T}^+<\tau_0 \right)\circ k_{\tau_0}^{-1}}{P_0\left(\tau_{-T}<\tau_0^+\middle\vert\tau_0^+<+\infty\right)}. \nonumber
\end{eqnarray}
Finally, the numerator in the last term is the same as the one in the right-hand-side of (\ref{eq:undershoot-cond1}), due again to (\ref{eq:prob-cond-hit-0}) and the dualities from Proposition~\ref{prop:duality}.

Then, as announced, $(\rho\circ V)\circ\alpha^T$ has the law \ref{eq:undershoot-cond1}, which is, thanks to Lemma~\ref{lemma:contour-trees}, the contour of a splitting tree of lifespan measure $\widetilde\Pi$ and ancestor distributed as $\mu_\top$, conditioned on surviving up until time $T$ and truncated at $T$, say $\widetilde \ctree_{\top}^\prime$.
\end{proof}

It remains to understand the effect on the i.i.d. excursions $(\widetilde{\epsilon}_i, 1\leq i\leq N)$, of the reversal operator $\rho$, which is given in the following statement.

\begin{claim}\label{claim:rho-epsilon-tilde}
For $1\leq i\leq N$,
\[
\rho\circ\widetilde{\epsilon}_i \,{\buildrel d \over =}\, \Co\left( \widetilde \ctree_{\top,i}\right)
\]
where $\widetilde \ctree_{\top,i}$ is a sequence of i.i.d. splitting trees conditioned on dying out before $T$.
\end{claim}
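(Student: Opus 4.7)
The plan is to reduce $\rho\circ\widetilde\epsilon_i$ to a space-time reversal of an excursion of $\Y$ starting at $0$, apply the duality of Proposition~\ref{prop:duality}(ii), and identify the resulting law with that of the contour of $\widetilde\ctree_\top$ conditional on extinction before $T$.

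First, by spatial homogeneity of $\Y$, the law $P_T(\cdot\vert\tau_T^+<\tau_0)\circ k_{\tau_T^+}^{-1}$ is the shift by $T$ of $P_0(\cdot\vert\tau_0^+<\tau_{-T})\circ k_{\tau_0^+}^{-1}$. Since the constant shift is absorbed in the definition of $\rho$, one has $\rho\circ\widetilde\epsilon_i\,{\buildrel d \over =}\,(-\Y_{(\tau_0^+-t)-},\,0\leq t<\tau_0^+)$ under $P_0(\cdot\vert\tau_0^+<\tau_{-T})$. Then Proposition~\ref{prop:duality}(ii) identifies, conditional on the undershoot $-\Y_{\tau_0^+-}=u$, this reversed excursion with the process $(\Y_t,\,0\leq t<\tau_0)$ under $P_u(\cdot\vert\tau_0<+\infty)$. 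The additional event $\{\tau_0^+<\tau_{-T}\}$, stating that the infimum of $\Y$ before $\tau_0^+$ is strictly above $-T$, transports under reversal to the event that the dual path (starting at $u$) stays below $T$ before hitting $0$, i.e.\ $\{\tau_0<\tau_T^+\}$. Combining these and using \eqref{eq:prob-cond-hit-0} to pass from $P_u(\cdot\vert\tau_0<+\infty)$ to $\widetilde{P}_u$, the law of $\rho\circ\widetilde\epsilon_i$ conditional on $u$ is $\widetilde{P}_u(\cdot\vert\tau_0<\tau_T^+)\circ k_{\tau_0}^{-1}$.

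The same duality argument yields $P_0(\tau_0^+<\tau_{-T}\vert-\Y_{\tau_0^+-}=u)=\widetilde{P}_u(\tau_0<\tau_T^+)$, and combined with $P_0(-\Y_{\tau_0^+-}\in du,\tau_0^+<+\infty)=(m\wedge 1)\,\mu_\top(du)$ this gives the marginal $P_0(-\Y_{\tau_0^+-}\in du\vert\tau_0^+<\tau_{-T})\propto\mu_\top(du)\,\widetilde{P}_u(\tau_0<\tau_T^+)$. Integrating over $u$, the unconditional law of $\rho\circ\widetilde\epsilon_i$ is proportional to $\int\mu_\top(du)\,\widetilde{P}_u(\cdot,\tau_0<\tau_T^+)\circ k_{\tau_0}^{-1}$, which by Theorem~\ref{theo:lambert-2010} applied to the Lévy process $\widetilde\Y$ with ancestor of law $\mu_\top$ is precisely the law of $\Co(\widetilde\ctree_\top)$ conditional on $\xi_T=0$; since $\widetilde\eta=0$, every $\widetilde\Y$-excursion from $u>0$ almost surely hits $0$, so no extinction conditioning is needed beyond $\tau_0<\tau_T^+$.

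The main subtlety will be the transport of the conditioning event $\{\tau_0^+<\tau_{-T}\}$ across the space-time reversal, for which Proposition~\ref{prop:duality}(ii) is the essential tool; once this is set up correctly, the rest amounts to careful bookkeeping of conditional probabilities and a direct comparison of the two mixture representations.
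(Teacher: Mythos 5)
Your proposal is correct and follows essentially the same route as the paper: space-time reversal of each excursion via Proposition~\ref{prop:duality}(ii) conditionally on the undershoot, computation of the undershoot marginal on the event $\{\tau_0^+<\tau_{-T}\}$, passage to $\widetilde P$ via \eqref{eq:prob-cond-hit-0}, and identification of the resulting mixture $\widetilde P_\top(\cdot\mid\tau_0<\tau_T^+)\circ k_{\tau_0}^{-1}$ with the contour of a tree conditioned on dying out before $T$. The paper's proof is terser (it delegates the undershoot computation to ``the same reasoning as for Claim~\ref{claim:rho-V-alpha}''), but the steps you make explicit are exactly the ones it relies on.
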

\begin{proof}
We know from Proposition~\ref{prop:duality} that, conditional on $\widetilde{\epsilon}_i(\zeta-)=T-u$, the reversed excursions $\rho\circ \widetilde{\epsilon}_i$, has the law of $\widetilde\Y$ starting from $u$, conditioned on hitting 0 before $T$ and killed upon hitting 0, that is $\widetilde P_u(\cdot\vert\tau_0<\tau^+_T)\circ k_{\tau_0}^{-1}$.

The same reasoning as for Claim~\ref{claim:rho-V-alpha} yields that $T-\widetilde{\epsilon}_i(\zeta-)$ has the distribution $P_0\left(-\Y_{\tau_0^+-}\in\cdot\middle\vert\tau_0^+<\tau_{-T}<+\infty \right) $ for every $1\leq i\leq N$, and then, each of the reversed excursions $\rho\circ\widetilde{\epsilon}_i$ has the law $\widetilde P_\top(\cdot\vert \tau_0<\tau_T^+)\circ k_{\tau_0}^{-1}$. This is the same as the contour process of splitting trees, say $\widetilde \ctree_{\top,i}$, all i.i.d. for $1\leq i\leq N$ and conditioned on dying out before $T$.
\end{proof}

\begin{proof}[Proof of Theorem~\ref{theo:main1}]
We have now all the elements to complete the proof of this result.
Notice the transformations we have done to the trajectories of the process $Z=[\epsilon,\widetilde\epsilon]$ in terms of its excursions, can also be expressed in terms of the time-changes $\alpha, \alpha^T$ and the path transformations $\rho,\chi$ (Fig.~\ref{fig:Z}), as follows
\[
\chi(Z)= [V,\widetilde \epsilon_1,\ldots,\widetilde \epsilon_N] 
\]
and stressing that all these paths start by taking the value $T$, we have
\[
\rho\circ \chi( Z)	= [\rho\circ \widetilde\epsilon_N,\ldots,\rho\circ \widetilde \epsilon_1,\rho\circ V].
\]
On the other hand, after Claim~\ref{claim:zeta}, we have $\Co^{(T)}(\f_\bot^{\tilde\gamma})= Z\circ\alpha$, so
\[
\K(\Co^{(T)}(\f_\bot^{\tilde\gamma})) \,{\buildrel d \over =}\, \K\circ Z\circ\alpha = (\rho\circ \chi)\circ (Z\circ \alpha) =  
\]
\[
=[\rho\circ \widetilde\epsilon_N,\ldots,\rho\circ \widetilde \epsilon_1,(\rho\circ V)\circ\alpha^T] 
\]
We have proved that the right-hand term in the last equation has the law of the contour of a sequence of independent splitting trees $\widetilde \ctree_{\top,i}$, which are i.i.d. for  $1\leq i\leq N$, conditioned on dying out before $T$, and a last tree $\widetilde \ctree_{\top}$ conditioned on surviving up until time $T$. Since $N$ is geometric with parameter $\gamma$, this is the same as the process $\Co^{(T)}(\widetilde\f^\gamma_\top)$, which concludes the proof.
\end{proof}

\section{Epidemiology}\label{sec:epi}

In general, in the context of  epidemiology, phylogenetic trees are considered to be estimated from genetic sequences obtained at a single time point, or sampled sequentially through time since the beginning of the epidemic. 
There exists several methods allowing this estimation, which are not addressed here.
We assume the estimated \textit{reconstructed trees} (i.e. the information about non sampled hosts is erased from the original process) are the transmission trees from sampled individuals and no uncertainty on the branch lengths is considered.
This hypothesis makes sense when the epidemiological and evolutionary timescales can be supposed to be similar \cite{Vol13}.
These reconstructed phylogenies can provide information on the underlying population dynamic process \cite{Tho75,NMH94,DruPy03} and there is an increasing amount of work on this relatively new field of phylodynamics.

Most of phylodynamic models are based on Kingman's coalescent, but its poor realism in the context of epidemics (rapid growth, rapid fluctuations, dense sampling) has motivated other authors to use birth-death or SIR processes for the dynamics of the epidemics \cite{Vol09,Ra11,StaKo12,LaTr13,LaAlSt14}.
A common feature for most of these works is that they use likelihood-based methods that intend to infer the model parameters on the basis of available data, via maximum likelihood estimation (MLE), or in a Bayesian framework. 

Usually, not only the reconstructed phylogenies described above are available, but also incidence time-series, that is, the number of new cases registered through time (typically daily, weekly or monthly). This information may come from hospital records, surveillance programs (local or national), and is not necessarily collected at regular intervals.
As we mentioned before, here we are interested in the scenario where both types of data are available: phylogenetic trees (reconstructed from pathogen sequences) and incidence time series.

From the probabilistic point of view, we are interested in the distribution of the size process of the host population, denoted by $I\coloneqq(I_t, 0\leq t\leq T)$, conditional on the \textit{reconstructed transmission tree} from infected individuals at time $T$. 
More precisely, we want to characterize the law of $I$, conditional on $\sigma=(\sigma_i, 1\leq i\leq n)$ to be the coalescence times in the reconstructed tree of transmission from extant hosts at $T$.

We suppose that the host population has the structure of a forest $\f^{\tilde\gamma}_\bot$, so we consider there are a geometric number $\widetilde N$ (with parameter $\widetilde\gamma$) of infected individuals at time 0, for which the corresponding transmission tree dies out before $T$, and a last one which is at the origin of all the present-time infectives. Let $(H_i)_{1\leq i\leq N}$ be the coalescence times between these infected individuals at $T$, where as before, $N$ is an independent geometric r.v. with parameter $\gamma$. 
Here we are interested in characterizing the distribution of $\Xi(\f_\bot^{\tilde\gamma})$, the total population size process of infected individuals on $[0,T]$, conditional on $H_i=\sigma_i, 1\leq i\leq N$.

Since in our model, all infected individuals at $T$ belong to the last tree in the forest, which we know from the definition, is conditioned on surviving up until time $T$, a result from \cite{Lam10} and the pathwise decomposition of the contour process of a forest from the previous section, tells us that these coalescence times are precisely the depths of the excursions away from $T$ of  $\Co(\f_\bot^{\tilde\gamma})$, that is, $H_i \overset{d}{=} \inf \widetilde \epsilon_i$ for $1\leq i\leq N$. We recall that $\widetilde\epsilon_i$ are i.i.d. with law $P_T (\cdot\vert \tau^+_T<\tau_0)\circ k_{\tau_T^+}^{-1}$.

Then, conditioning the population size process on the coalescence times is the same as these excursions of the Lévy process $\Y$ conditioned on their infimum, which becomes, after the corresponding space-time-reversal, their supremum, since, $P_T(\cdot\vert \tau_T^+<\tau_0) \circ k^{-1}_{\tau_T^+}$-a.s., we have $\sup_{[0,\tau_T^+)}\rho\circ\widetilde\epsilon = \sup_{t\in[0,\tau_T^+)}T-\widetilde\epsilon((\tau_T^+-t)-) = \inf_{[0,\tau_T^+)}\widetilde\epsilon$. 
Besides, thanks to Theorem~\ref{theo:main1}, when we reverse the time, these excursions themselves are distributed as the contour of independent subcritical (with measure $\widetilde\Pi$) splitting trees conditioned on hitting 0 before $T$, starting  from a value distributed as $\mu_\top$ in $[0,T]$. 
Therefore, conditioning these excursions on their supremum is the same as conditioning the corresponding trees on their height, that is, conditioning on the time of extinction of each tree with lifespan measure $\widetilde\Pi$ to equal the corresponding time of coalescence $\sigma_i$. We notice that conditioning on an event as $\{T_{\textrm{Ext}}=s\}$, is possible here since the time of extinction of a tree $\widetilde\ctree_\top$ always has a density (because $\mu_\top$ has a density).

We also know from the proof of Theorem~\ref{theo:main1}, that the total population process of the forest, also takes into account the width process of the excursion $V\circ \alpha$, which is independent of $\widetilde\epsilon_i,1\leq i\leq N$, and when reversed, has the law of the contour of a splitting tree truncated up to $T$, and conditioned on surviving up until time $T$, that is $(\rho\circ V)\circ \alpha^T \overset{d}{=} \Co^{(T)}(\widetilde\ctree_\top ^\prime)$ as stated in Claim~\ref{claim:rho-V-alpha}.

More precisely we have the following result,

\begin{theorem}
Let $\f^{\tilde\gamma}_\bot$ and $(H_i)_{i\geq 1}$ as defined before. 
Then, under $P(\cdot\vert H_i=\sigma_i,1\leq i\leq N)$, the population size process backward in time, $(\xi_{T-t}(\f^{\tilde\gamma}_\bot), 0\leq t\leq T)$, is the sum of the width processes of $N+1$ independent splitting trees $(\widetilde \ctree_{\top,i})_{1\leq i\leq N+1}$, where,
\begin{itemize}
\item for $1\leq i\leq N$, $\widetilde \ctree_{\top,i}$ are subcritical splitting trees with lifespan measure $\widetilde \Pi$, starting with an ancestor with lifespan distributed as $\mu_\top$, and conditioned on its time of extinction to be $\sigma_i$, that is with law $\widetilde \Pro_\top(\cdot\vert T_{\textrm{Ext}}=\sigma_i)$
\item the last tree $\widetilde \ctree_{\top,N+1}$ is a subcritical splitting tree with lifespan measure $\widetilde \Pi$, starting with an ancestor with lifespan distributed as $\mu_\top$, and conditioned on surviving up until time $T$, that is with law $\widetilde \Pro_\top(\cdot\vert T_{\textrm{Ext}}> T)$.
\end{itemize}
\end{theorem}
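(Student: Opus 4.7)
The plan is to read this result off Theorem~\ref{theo:main1} by tracking what the path transformation $\K$ does to the excursions that encode the coalescence times. The key observation, which is already partly highlighted in the paragraphs preceding the statement, is that $\K$ exchanges the role of the suffix sequence of excursions of $\Co^{(T)}(\f^{\tilde\gamma}_\bot)$ away from $T$ with the prefix sequence of excursions of $\Co^{(T)}(\widetilde\f^\gamma_\top)$ away from $0$, turning the infimum of an excursion into the supremum of its time-reversal (i.e.\ the height of the corresponding dual tree).

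First, I would argue that for the surviving tree $\ctree_{\bot,N_{\tilde\gamma}+1}$ of the forest $\f^{\tilde\gamma}_\bot$, the coalescence times of the $N+1$ individuals extant at $T$ coincide in distribution with the infima of the $N$ excursions $\widetilde\epsilon_1,\ldots,\widetilde\epsilon_N$ of its contour away from $T$, using the description of the contour given in Section~\ref{subsec:contour} and in the definition of the sequence $\widetilde\epsilon$ preceding Claim~\ref{claim:zeta}. In other words, $(H_1,\ldots,H_N)\,{\buildrel d \over =}\,(\inf\widetilde\epsilon_1,\ldots,\inf\widetilde\epsilon_N)$ jointly with the excursions themselves, so the conditional law of $(\widetilde\epsilon_i)_{1\leq i\leq N}$ given $H_i=\sigma_i$ is a regular conditional distribution on their infima.

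Next, I would invoke the proof of Theorem~\ref{theo:main1}, which shows that, under the path transformation $\K$, each reversed excursion $\rho\circ\widetilde\epsilon_i$ (for $1\leq i\leq N$) has the law of $\Co(\widetilde\ctree_{\top,i})$ with $\widetilde\ctree_{\top,i}$ an independent copy of a splitting tree with measure $\widetilde\Pi$ and ancestor lifespan $\mu_\top$ conditioned on $\{T_{\textrm{Ext}}<T\}$ (Claim~\ref{claim:rho-epsilon-tilde}); while $(\rho\circ V)\circ\alpha^T$ has, independently, the law of $\Co^{(T)}(\widetilde\ctree'_\top)$, a tree with the same law conditioned instead on $\{T_{\textrm{Ext}}>T\}$ (Claim~\ref{claim:rho-V-alpha}). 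Since for a càdlàg excursion $\omega$ starting and ending at $0$ one has $\sup_{t<\zeta}\rho(\omega)_t = -\inf_{t<\zeta}\omega_t$, the infimum $\inf\widetilde\epsilon_i$ of the original excursion equals the height of the dual tree $\widetilde\ctree_{\top,i}$, which for a splitting tree is exactly its time of extinction $T_{\textrm{Ext}}(\widetilde\ctree_{\top,i})$. Conditioning on $H_i=\sigma_i$ therefore corresponds, through the distributional identity of Theorem~\ref{theo:main1}, to conditioning the independent dual trees $\widetilde\ctree_{\top,i}$ on $\{T_{\textrm{Ext}}=\sigma_i\}$ for $1\leq i\leq N$, and leaves the independent tree $\widetilde\ctree'_\top$ (conditioned on $\{T_{\textrm{Ext}}>T\}$) untouched.

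Finally, since the local time of the contour equals the width of the forest, $\Gamma_{T-\cdot}\circ\Co^{(T)}(\f^{\tilde\gamma}_\bot)=\Gamma_\cdot\circ\K(\Co^{(T)}(\f^{\tilde\gamma}_\bot))$, exactly as in the proof of Theorem~\ref{theo:main}, the time-reversed width process $(\xi_{T-t}(\f^{\tilde\gamma}_\bot),0\leq t\leq T)$ becomes the sum of the width processes of the $N+1$ independent trees, which yields the claimed decomposition. The main delicate point will be justifying the conditioning on the null event $\{H_i=\sigma_i\}$: this requires checking that $T_{\textrm{Ext}}(\widetilde\ctree_\top)$ admits a density, which is mentioned in the discussion preceding the theorem (it inherits a density from $\mu_\top$), so that the regular conditional law $\widetilde\Pro_\top(\cdot\mid T_{\textrm{Ext}}=\sigma_i)$ is well-defined and the disintegration along $(H_1,\ldots,H_N)$ of the identity in Theorem~\ref{theo:main1} holds almost everywhere in $(\sigma_1,\ldots,\sigma_N)$.
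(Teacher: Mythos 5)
Your proposal is correct and follows essentially the same route as the paper: identify the coalescence times with the depths of the excursions $\widetilde\epsilon_i$ of the contour below $T$, use Claims~\ref{claim:rho-V-alpha} and \ref{claim:rho-epsilon-tilde} to turn the reversed excursions into independent dual trees (with the infimum becoming the height, i.e.\ the extinction time, of each dual tree), pass to width processes via the local-time identity, and justify the null-event conditioning by the density of $T_{\textrm{Ext}}$ inherited from $\mu_\top$. The only difference is cosmetic: you spell out the disintegration along $(H_1,\ldots,H_N)$ slightly more explicitly than the paper does.
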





\appendix

\section{Remaining proofs}

In this section we proceed to the proofs of Lemma~\ref{lemma:contour-trees}, Lemma~\ref{lemma:gamma-tilde-gamma}, Lemma~\ref{lemma:last-exc} and Lemma~\ref{lemma:funct-law}.

\begin{proof}[Proof of Lemma~\ref{lemma:contour-trees}]
These statements are quite immediate consequences of Theorem 4.3 from \cite{Lam10}, the strong Markov property and stationarity of the increments of $Y$.
We will expand the arguments for (i) and the other statements can be proved similarly.
We know from \ref{theo:lambert-2010}, that conditional on $\textrm{Ext}$ and $\zeta(\varnothing)=x$, the contour  $\Co=\Co(\ctree_\bot)$ has the law of $\left(Y_t,0\leq t\leq \tau_0\right)$ under $P_x\left(\cdot\middle\vert\tau_0<+\infty\right)$. Hence, it follows from the definition of $\ctree_\bot$ and by conditioning on its ancestor lifespan that,
\begin{eqnarray}
&& \Pro_\bot\left(\Co\in \cdot \middle\vert \textrm{Ext}\right) = \int\limits_0^\infty \Pro\left( \Co\in \cdot \middle\vert \textrm{Ext}, \zeta(\varnothing)=x\right) \Pro_\bot\left(\zeta(\varnothing)\in \der x\right) \nonumber \\
&& \qquad = \int\limits_0^\infty P_x\left( Y\circ k_{\tau_0}\in \cdot \middle\vert \tau_0<+\infty\right) P_0\left(Y_{\tau_0^+}\in \der x\middle\vert \tau_0^+<+\infty\right) \nonumber \\
&& \qquad = \int\limits_0^\infty P_0\left( Y\circ\theta_{\tau_0^+}\circ k_{\tau_0}\in \cdot \middle\vert Y_{\tau_0^+} = x, \tau_0<+\infty\right) P_0\left(Y_{\tau_0^+}\in \der x\middle\vert \tau_0^+<+\infty\right) \nonumber  \\
&& \qquad = P_0\left( Y\circ\theta_{\tau_0^+}\circ k_{\tau_0}\in \cdot \middle\vert \tau_0<+\infty\right), \nonumber 
\end{eqnarray}
which ends the proof of (i).
\end{proof}

\begin{proof}[Proof of Lemma~\ref{lemma:gamma-tilde-gamma}]

First we want to prove that $\widetilde{\gamma} =\Pro_\bot\left(\xi_T\neq 0\right)$. 
To do that, we can express this probability in terms of the contour process, thanks to Lemma~\ref{lemma:contour-trees} (ii), we have
\begin{eqnarray}
&&\Pro_\bot \left(\xi_T= 0\right) = P_0\left( \tau_0<\tau_T^+\middle \vert \tau_0^+<+\infty\right) \nonumber
\end{eqnarray}
According to (\ref{eq:return-to-zero-finite}), in the supercritical case, we have $P_0(\tau_0^+<+\infty)=1$.
The probability that the process $Y$, starting from $0$, returns to $0$ before it reaches the interval $[T,+\infty)$, can be computed by integrating with respect to the  measure of the overshoot at $0$, of an excursion starting from $0$. 
Then, by applying the strong Markov property, equations (\ref{eq:exit-two-side}) and (\ref{eq:convol-W-tilde}), we prove the first identity,
\begin{eqnarray}
&&P_0\left(\tau_0<\tau_T^+\right) 
= \int\limits_0^T
P_v\left(\tau_0<\tau^+_T\right) P_0\left(Y_{\tau_0^+}\in \der v\right)
\nonumber \\
&&\qquad = \int\limits_0^T P_v \left( \tau_0<\tau_T^+\right) \e^{\eta v} \overline{\widetilde\Pi}(v) \der v  =  \int\limits_0^T \dfrac{W(T-v)}{W(T)} \e^{\eta v} \overline{\widetilde\Pi}(v) \der v \nonumber \\
&&\qquad =  \dfrac{1}{W(T)\e^{-\eta T}}\int\limits_0^T W(T-v)\e^{-\eta (T- v)} \overline{\widetilde\Pi}(v) \der v  =  \dfrac{1}{\widetilde W(T)}\int\limits_0^T \widetilde W(T-v) \overline{\widetilde\Pi}(v) \der v  \nonumber \\
&&\qquad = 1 - \dfrac{1}{\widetilde W(T)} \nonumber 
\end{eqnarray}
\smallskip

The second statement is that $\gamma =  \widetilde\Pro_\top\left(\xi_T\neq 0\right)$, which follows from,
\begin{eqnarray}
&&\widetilde P_\top\left(\tau_0<\tau_T^+\right) =\int\limits_0^T  \widetilde P_u\left(\tau_0<\tau_T^+\right) P_0\left(-Y_{\tau_0^+-}\in\der u \right) \nonumber \\
&& \qquad =   \int\limits_0^T \dfrac{\widetilde W(T-u)}{\widetilde W(T)} \e^{-\eta u} \overline{\Pi}(u) \der u =  \int\limits_0^T \dfrac{\e^{\eta(T-u)}\widetilde W (T-u)}{\e^{\eta T}\widetilde W(T)}  \overline{\Pi}(u) \der u \nonumber \\
&&\qquad =  \dfrac{1}{W(T)}\int\limits_0^T  W(T-u) \overline{\Pi}(u) \der u = 1 - \dfrac{1}{W(T)}  \nonumber 
\end{eqnarray}

The second statement can also be established thanks to Proposition~\ref{prop:duality} (ii). This duality property, together with equations~(\ref{eq:exit-two-side}) and (\ref{eq:prob-cond-hit-0}), imply that
\[
\widetilde\Pro_\top\left(\xi_T=0\right) = \widetilde P_\top\left(\tau_0<\tau_T^+\right) = P_T\left(\tau_T^+<\tau_0\right) = 1 - \dfrac{1}{W(T)}
\] 
which is the expected result.
\end{proof}
\medskip

\begin{proof}[Proof of Lemma~\ref{lemma:last-exc}]
Let $\varepsilon\coloneqq Y\circ k_{\tau_0} = (\Y_t,0\leq t < \tau_0)$, we want to prove that $\varepsilon$ and $\rho\circ \varepsilon$ have the same law under the probability measure $P_T\left(\cdot\middle\vert \tau_0<\tau_T^+\right)$. 

Define
\[
\varsigma_T\coloneqq \sup \{t\in[0,\tau_0): Y_t = T\}.
\]
Since the process makes no negative jumps, under $P_0(\cdot\vert\tau_0<+\infty)$ the events $\varsigma_T<+\infty$ and $\tau_T^+<\tau_0$ a.s. coincide. Hence, if we condition the excursion $\varepsilon$ to enter the interval $[T,+\infty)$, and $\tau_0$ to be finite, the law of the excursion shifted  to this last passage at $T$, that is 
\begin{equation}\label{eq:aster}
P_0(\varepsilon\circ \theta_{\varsigma_T} \in \cdot\vert \tau_T^+<\tau_0<+\infty) =P_T\left(\cdot\middle\vert \tau_0<\tau_T^+\right)\circ k^{-1}_{\tau_0}. 
\end{equation}

On the other hand, we know from Proposition~\ref{prop:duality} that the probability measure of $\varepsilon$ starting at 0 and conditional on $\{\tau_0<+\infty\}$ is invariant under space-time-reversal, meaning that for every bounded measurable function $F$ we have
\begin{eqnarray} 
E_0\left[F(\varepsilon)\middle\vert \tau_0<+\infty\right] = E_0\left[F(\rho\circ\varepsilon)\middle\vert \tau_0<+\infty\right] \nonumber
\end{eqnarray}
In particular, for any $f$ also bounded and measurable, take
\[
F(\varepsilon) = f(\varepsilon\circ \theta_{\varsigma_T(\varepsilon)})1_{\{\sup \varepsilon \geq T\}}.
\]
If we now apply this function to the reversed excursion, it is not hard to see that, 
$P_0(\cdot\vert\tau_0<+\infty)$ a.s. we have, $\{\sup_{[0,\tau_0]}\rho\circ\varepsilon \geq T\} =\{\inf_{[0,\tau_0]}\varepsilon \leq  -T\}$ and $\varsigma_T(\rho\circ\varepsilon) = \tau_0(\varepsilon) - \tau_{-T}(\varepsilon)$. From the definition of space-time-reversal and shifting operators it also follows that
\[
(\rho\circ\varepsilon)\circ \theta_{\varsigma_T(\rho\circ\varepsilon)} = \left(-\varepsilon((\tau_0-t)-)\right)\circ \theta_{\tau_0-\tau_{-T}}(\varepsilon) = -\varepsilon((\tau_{-T}-t)-) = \rho\circ (\varepsilon\circ k_{\tau_{-T}}).
\]
Hence for any bounded measurable $f$ it holds that
\begin{eqnarray} 
E_0\left[f(\varepsilon\circ \theta_{\varsigma_T})1_{\{\sup \varepsilon \geq T\}} \middle\vert \tau_0<+\infty\right] = E_0\left[f\left(\rho\circ (\varepsilon\circ k_{\tau_{-T}})\right) 1_{\{\inf \varepsilon \leq -T\}} \middle\vert \tau_0<+\infty\right], \nonumber
\end{eqnarray}
and in particular for $f \equiv 1$ we have
\begin{eqnarray}
P_0\left(\sup\varepsilon \geq T\middle\vert \tau_0<+\infty\right) = P_0\left(\inf\varepsilon \leq -T\middle\vert \tau_0<+\infty\right). \nonumber 
\end{eqnarray}
Combining these two equations and using that $\{\sup\varepsilon\geq T\} = \{\tau_T^+< \tau_0\}$ and $\{\inf\varepsilon\leq -T\}=\{\tau_{-T}< \tau_0^+\}$ a.s. under $P_0(\cdot\vert\tau_0<+\infty)$, we have
\begin{equation}\label{eq:aster-2}
P_0\left(\varepsilon\circ \theta_{\varsigma_T}\middle\vert \tau_T^+< \tau_0\right) = P_0\left(\rho\circ (\varepsilon\circ k_{\tau_{-T}})\middle\vert \tau_{-T}< \tau_0^+\right).
\end{equation}

According to (\ref{eq:aster}), the left-hand-side in (\ref{eq:aster-2}) equals $P_T\left(\cdot\middle\vert \tau_0<\tau_T^+\right)\circ k^{-1}_{\tau_0}$.
Finally, the conclusion comes from the following consequences of the strong Markov property. First, the excursion $\varepsilon\circ k_{\tau_{-T}}$ has the same law under $P_0(\cdot\vert \tau_{-T}<\tau_0,\tau_0<+\infty)$ as under $P_0(\cdot\vert \tau_{-T}<\tau_0^+)$ . And $\rho\circ(\varepsilon\circ k_{\tau_{-T}})$ under $P_0(\cdot\vert\tau_{-T}<\tau_0^+)$ has the same law as $\rho\circ(\varepsilon\circ k_{\tau_{0}})$ under $P_T(\cdot\vert\tau_{0}<\tau_T^+)$.
\end{proof}

\medskip

\begin{proof}[Proof of Lemma~\ref{lemma:funct-law}]
We need to prove that for any $a>0$, $x\in[0,a]$ and $\Lambda \in \mathscr{F}_{\tau_a}$ the following identity holds
\[
\widetilde P_x\left(\Lambda,\tau_a^+<\tau_0\right) =
P_x\left(\Lambda, \tau_a^+<\tau_0,\tau_a<+\infty\right) \e^{-\eta(a-x)}
\]

This is trivial when $\eta = 0$, so we will suppose from now $\eta>0$.
Since the process makes only positive jumps, we have $\{\tau_a<\tau_0\}=\{\tau_a^+<\tau_0\}$, $\widetilde P_x$ a.s., so we can start looking at
\begin{eqnarray} 
\widetilde P_x \left(\Lambda, \tau_a<\tau_0\right) 
= \widetilde E_x\left[ \widetilde E_x\left[ \1_{\{\Lambda, \tau_a<\tau_0\}}\middle\vert \mathscr{F}_{t\wedge\tau_a} \right] \right]
= E_x\left[ \widetilde E_x\left[ \1_{\{\Lambda, \tau_a<\tau_0\}}\middle\vert \mathscr{F}_{t\wedge\tau_a} \right]  \frac{\e^{-\eta \Y_{t\wedge\tau_a}}}{\e^{-\eta x}}\right] \nonumber 
\end{eqnarray}
and subsequently by the strong Markov property the last term equals
\begin{eqnarray} 
= E_x\left[ \1_{\{\Lambda, t\wedge\tau_a<\tau_0\}} \widetilde P_{\Y_{t\wedge \tau_a}}\left(\tau_a<\tau_0 \right)  \frac{\e^{-\eta \Y_{t\wedge\tau_a}}}{\e^{-\eta x}}\right] \nonumber 
\end{eqnarray}
We notice that the process $\Y_{t\wedge\tau_a}$ stays positive on $\{t\wedge\tau_a<\tau_0\}$, hence 
\[
\1_{\{t\wedge \tau_a<\tau_0\}}\e^{-\eta \Y_{t\wedge\tau_a}}\leq 1.
\]
Besides, when $\tau_a=+\infty$, this is the same as $\e^{-\eta \Y_t}$, which tends to 0 when $t\rightarrow +\infty$ because the process drifts to $+\infty$ under $P$ when $\eta>0$. The other terms are bounded by 1, so the dominated convergence theorem applies and we have
\begin{equation*}
\widetilde P_x \left(\Lambda, \tau_a<\tau_0\right)  \xrightarrow[t\rightarrow +\infty]{}  E_x\left[ \1_{\{\Lambda, \tau_a<\tau_0\}}  \frac{\e^{-\eta a}}{\e^{-\eta x}}\1_{\{\tau_a<+\infty\}}\right],
\end{equation*}
which concludes the proof.
\end{proof}

\section*{Acknowledgements}
This work was supported by grants from Région Ile-de-France and ANR. The authors also thank the Center for Interdisciplinary Research in Biology (CIRB) for funding.

\bibliographystyle{alea3}
\bibliography{biblio-utf8}

\end{document}